\documentclass[11pt]{amsart}

\usepackage[colorlinks,linkcolor=black,anchorcolor=blue,citecolor=blue]{hyperref}

\usepackage{amscd,amsmath,amssymb,amsfonts,amsthm,mathrsfs}
\usepackage[a4paper,text={160mm,240mm},centering,headsep=5mm,footskip=10mm]{geometry}
\usepackage[all]{xy}

\usepackage{array}
\usepackage{color}

\numberwithin{equation}{section}
\setcounter{tocdepth}{2}

\newtheorem{theorem}{Theorem}[section]
\newtheorem{lemma}[theorem]{Lemma}
\newtheorem{keylemma}[theorem]{Key Lemma}
\newtheorem{proposition}[theorem]{Proposition}
\newtheorem{corollary}[theorem]{Corollary}

\newtheorem{definition}[theorem]{Definition}
\newtheorem{remark}[theorem]{Remark}

\newtheorem{notat}[theorem]{Notation}
\newtheorem*{notat*}{Notation}

\newtheorem{conj}[theorem]{Conjecture}

\newcommand{\Z}{\mathbb{Z}}
\newcommand{\Q}{\mathbb{Q}}
\newcommand{\N}{\mathbb{N}}
\renewcommand{\O}{\mathcal{O}}

\renewcommand{\r}{\rightarrow}

\newcommand{\Zar}{\text{Zar}}

\newcommand{\et}{\mathrm{\acute{e}t}}
\newcommand{\Spec}{\mathrm{Spec}}

\newcommand{\CH}{\mathrm{CH}}

\begin{document}
\title[Zero-cycles with coefficients in Milnor K-theory]{A restriction isomorphism for zero-cycles with coefficients in Milnor K-theory}
\author{Morten L\"uders}
\address{Fakult\"at f\"ur Mathematik, Universit\"at Regensburg, 93040 Regensburg, Germany}
\email{morten.lueders@mathematik.uni-regensburg.de}

\begin{abstract}
We prove a restriction isomorphism for Chow groups of zero-cycles with coefficients in Milnor K-theory for smooth projective schemes over excellent henselian discrete valuation rings. Furthermore, we study torsion subgroups of these groups over local and finite fields.
\end{abstract}

\thanks{The author is supported by the DFG through CRC 1085 \textit{Higher Invariants} (Universit\"at Regensburg).}
\maketitle
\tableofcontents

\setcounter{page}{1}
\setcounter{section}{0}
\pagenumbering{arabic}

\section*{Introduction}
Let $\mathcal{O}_K$ be an excellent henselian discrete valuation ring with quotient field $K$ and residue field $k=\mathcal{O}_K/\pi\mathcal{O}_K$. Let $X$ be a smooth and projective scheme over Spec$\mathcal{O}_K$ of fiber dimension $d$. Let $X_K$ denote the generic fiber and $X_0$ the reduced special fiber. By $X^{(p)}$ we denote the set of points of codimension $p$ in $X$.

We call the groups
$$\text{coker}[\bigoplus_{x\in X_0^{(d-1)}}K_{j-d+1}^Mk(x)\rightarrow \bigoplus_{x\in X_0^{(d)}}K_{j-d}^Mk(x)]$$
and 
$$H^1[\bigoplus_{x\in X^{(d-1)}}K_{j-d+1}^Mk(x)\rightarrow \bigoplus_{x\in X^{(d)}}K_{j-d}^Mk(x)\r \bigoplus_{x\in X^{(d+1)}}K_{j-d-1}^Mk(x)]$$
Chow groups of zero-cycles (resp. relative zero-cycles) with coefficients in Milnor K-theory. Here $H^1[A\r B\r C]:=\text{ker}[B\r C]/\text{im}[A\r B]$ for abelian groups $A,B,C$ and the complexes are the ones defined by Kato in \cite[Sec. 1]{Ka83} for arbitrary excellent finite dimensional schemes. The first complex coincides with the one defined by Rost in \cite{Ro96} for varieties over a field.
These cohomology groups are related to the higher Chow groups
$$\CH^j(X_0,j-d)$$
and
$$\CH^j(X,j-d).$$
In fact the groups $\text{coker}[\oplus_{x\in X_0^{(d-1)}}K_{j-d+1}^Mk(x)\rightarrow \oplus_{x\in X_0^{(d)}}K_{j-d}^Mk(x)]$ and $\CH^j(X_0,j-d)$ are isomorphic. This is well known, see for example \cite{Ak04'} (see also Section \ref{Some identifications}). The identification of $H^1[\oplus_{x\in X^{(d-1)}}K_{j-d+1}^Mk(x)\rightarrow \oplus_{x\in X^{(d)}}K_{j-d}^Mk(x)\r \oplus_{x\in X^{(d+1)}}K_{j-d-1}^Mk(x)]$ and $\CH^j(X,j-d)$ depends on the Gersten conjecture for a henselian DVR for higher Chow groups if we work integrally. However, which is sufficient for our purposes, they are isomorphic if considered with finite coefficients (see Section \ref{Some identifications}). We will therefore sometimes also call the groups $\CH^j(X_0,j-d)$ and $\CH^j(X,j-d)$ Chow groups of zero-cycles (resp. relative zero-cycles) with coefficients in Milnor K-theory.

In this article we study the restriction homomorphism on higher Chow groups 
$$res^{\CH}:\CH^j(X,2j-i)\xrightarrow{} \CH^j(X_0,2j-i)$$
for $i-j=d$. We recall the definition of the restriction homomorphism $res^{\CH}$ from \cite[Sec. 2]{Lu16}. It is defined to be the composition
$$\CH^j(X,2j-i)\xrightarrow{}\CH^j(X_K,2j-i)\xrightarrow{\cdot(-\pi)}\CH^{j+1}(X_K,2j-i+1)\xrightarrow{\partial}\CH^j(X_0,2j-i).$$
Here $\cdot(-\pi)$ is the product with a local parameter $-\pi\in \CH^1(K,1)=K^{\times}$ defined in \cite[Sec. 5]{Bl86} and $\partial$ is the boundary map coming from the localization sequence for higher Chow groups (see \cite{Le01}). We call the composition $$\CH^j(X_K,2j-i)\xrightarrow{\cdot(-\pi)}\CH^{j+1}(X_K,2j-i+1)\xrightarrow{\partial}\CH^j(X_0,2j-i)$$ a specialisation map and denote it by $sp^{\CH}_\pi$. One notes that $res^{\CH}$ is independent of the choice of $\pi$ whereas $sp^{\CH}_\pi$ depends on it. We denote higher Chow groups with coefficients in a ring $\Lambda$ by $\CH^j(X,j-d)_{\Lambda}$. From now on let $n$ be an integer greater than $1$ and invertible in $k$ and let $\Lambda=\Z/n\Z$. The main result of this article is the following:
\begin{theorem}[Thm. \ref{maintheoremintext}]\label{thmspchowmil}
The restriction map
$$res^{\CH}:\CH^j(X,j-d)_{\Lambda}\xrightarrow{} \CH^j(X_0,j-d)_{\Lambda}$$
is an isomorphism for all $j$.
\end{theorem}
This was conjectured in \cite[Conj. 10.3]{KEW16} by Kerz, Esnault and Wittenberg. More precisely, they conjecture that the corresponding restriction homomorphism for motivic cohomology
$res:H^{i,j}(X,\mathbb{Z}/n\mathbb{Z})\rightarrow H^{i,j}_{\mathrm{cdh}}(X_0,\mathbb{Z}/n\mathbb{Z})$
is an isomorphism for $i-j=d$. 
The case $j=d$ was first proved in \cite{SS10} assuming that $k$ is finite or separably closed and then generalised to arbitrary perfect residue fields in \cite{KEW16} using an idea of Bloch put forward in \cite{EWB16}. 

For $j=d+1$ and $k$ finite, Theorem \ref{thmspchowmil} also follows from the Kato conjectures. In fact, Jannsen and Saito observe in \cite{JS} that for $j=d+1$ and $k$ finite, the \'etale cycle class map
$$\rho_X^{j,j-d}:\CH^j(X,j-d)_{\Lambda} \r H^{d+j}_{\text{\'et}}(X,\Lambda(j))$$
fits into the exact sequence
\begin{equation*}
\begin{split}
...\r KH^{0}_{2+a}(X,\Z/n\Z)\r \CH^{d+1}(X,a)_{\Lambda}\r H_{\text{\'et}}^{2d+2-a}(X,\Lambda(d+1))\\
\r KH^{0}_{1+a}(X,\Z/n\Z)\r \CH^{d+1}(X,a-1)_{\Lambda}\r...,
\end{split}
\end{equation*}
where $KH^0_a(X,\Z/n\Z)$ denotes the homology of certain complexes $C^0_n(X)$ in degree $a$ defined by Kato. For more details see Section \ref{sectionkatocomplexes}. Note that we do not make any assumptions on $k$ in Theorem \ref{thmspchowmil}.

Theorem \ref{thmspchowmil} implies the following two well-known corollaries:
\begin{corollary}[Cor. \ref{cor1}]\label{cor1'}
Let $X$ be a smooth projective scheme over an excellent henselian discrete valuation ring $\O_K$ with finite or algebraically closed residue field. Then
$$\rho_X^{j,j-d}:\CH^j(X,j-d)_{\Lambda} \r H^{d+j}_{\et}(X,\Lambda(j))$$
is an isomorphism for all $j$.
\end{corollary}
\begin{corollary}[Cor. \ref{cor2}]
Let $X$ be as in Corollary \ref{cor1'} and let $X_K$ denote the generic fiber. Let the residue field of $K$ be finite (resp. separably closed). Then the groups
\begin{enumerate}
\item $\CH^j(X_K,j-d)_{\Lambda}$ are finite for all $j\geq 0$.
\item $\CH^j(X_K,j-d)_{\Lambda}=0$ for $j\geq d+3$ (resp. $j\geq d+2$).

\end{enumerate}
\end{corollary}

In the last two sections, we turn to torsion questions for Chow groups of zero-cycles with coefficients in Milnor K-theory. We show the following two propositions:
\begin{proposition}[Prop. \ref{finitenesstorsionjgeq1}]
Let $X_K$ be a smooth and proper scheme over a local field $K$ with ring of integers $\O_K$ and finite residue field $k$ of characteristic $p$. Assume that $X_K$ has good reduction over $\O_K$ and let $n>0$ be a natural number prime to $p$. Then for all $j\geq 1$ the groups $$\CH^{d+j}(X_K,j)[n]$$ are finite.
\end{proposition}

\begin{proposition}[Prop. \ref{propositionfinitenesstorsionfinitefield}]
Let $X$ be a smooth projective scheme of dimension $d$ over a finite field $k$ of characteristic $p>0$. Then the cycle class map
$$\rho:\CH^{d+1}(X,1)\{l\} \r  H^{2d+1}_{\et}(X,\Z_\ell(d+1))$$
is an isomorphism for $l$ a prime different from $p$, $\CH^{d+1}(X,1)\{p\}=0$ and the group
$$\CH^{d+1}(X,1)$$
is finite. 
\end{proposition}

\textit{Acknowledgements.} I would like to thank Moritz Kerz for many helpful comments and discussions. I would also like to thank Jean-Louis Colliot-Th\'el\`ene for helpful comments on Section \ref{secfiniteness}.

\section[Identifications]{Identification of zero-cycles with coefficients in Milnor K-theory}\label{Some identifications}
We start by recalling some basic facts about Milnor K-theory.
\begin{definition}
Let $k$ be a field. We define the $n$-th Milnor K-group $K^M_n(k)$ to be the quotient of $(k^{\times})^{\otimes n}$ by the Steinberg group, i.e. the subgroup of $(k^{\times})^{\otimes n}$ generated by elements of the form $a_1\otimes...\otimes a_n$ satisfying $a_i+a_j=1$ for some $1\leq i<j\leq n$. Elements of $K^M_n(k)$ are called symbols and the image of $a_1\otimes...\otimes a_n$ in $K^M_n(k)$ is denoted by $\{a_1,...,a_n\}$.
\end{definition}

One can easily see that in $K^M_2(k)$ the following relations hold:
$$\{x,-x\}=0 \text{   and   } \{x,x\}=\{x,-1\}$$
This implies the following lemma:
\begin{lemma}\label{lemmagenmilnork}
Let $K$ be a discrete valuation ring with ring of integers $A$, local parameter $\pi$ and residue field $k$. Then the abelian group $K^M_n(K)$ is generated by symbols of the form 
$$\{\pi,u_2,...,u_n\} \text{ and } \{u_1,u_2,...,u_n\}$$
with $u_i\in A^{\times}$ for $1\leq i \leq n$. 
\end{lemma}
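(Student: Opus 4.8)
The plan is to reduce an arbitrary symbol to the asserted shape using only the multilinearity of the symbol map together with the two elementary relations $\{x,-x\}=0$ and $\{x,x\}=\{x,-1\}$ recorded just above. First I would use that, for the given local parameter $\pi$, every element of $K^{\times}$ can be written uniquely as $\pi^{m}u$ with $m\in\Z$ and $u\in A^{\times}$. Substituting such expressions into a symbol $\{a_1,\dots,a_n\}$ and expanding --- the symbol map $(K^{\times})^{n}\r K^M_n(K)$ being multilinear --- shows that $K^M_n(K)$ is generated by symbols all of whose entries lie in $\{\pi\}\cup A^{\times}$; here one also uses $\{\dots,x^{-1},\dots\}=-\{\dots,x,\dots\}$ (itself a consequence of multilinearity) to dispose of negative exponents on $\pi$ and on units.

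Next I would remove repeated occurrences of $\pi$. If a generator has $\pi$ in two of its slots, the graded-commutativity relation $\{a,b\}=-\{b,a\}$ (a standard consequence of $\{x,-x\}=0$) lets me move the two copies of $\pi$ into adjacent positions, and the pair $\{\pi,\pi\}$ may then be rewritten as $\{\pi,-1\}$; since $-1\in A^{\times}$ this strictly decreases the number of slots equal to $\pi$. Iterating, $K^M_n(K)$ is generated by symbols with at most one entry equal to $\pi$, and a final application of graded-commutativity moves that entry, when present, into the first position. What is left are precisely the symbols $\{\pi,u_2,\dots,u_n\}$ with all $u_i\in A^{\times}$, together with the symbols $\{u_1,\dots,u_n\}$ all of whose entries are units, which is the assertion.

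I do not expect any genuine obstacle: apart from the two relations quoted before the statement, everything is sign bookkeeping. The one point to watch is to carry out the multilinear expansion and all the transpositions \emph{inside} $K^M_n(K)$, so that the relation $\{x,x\}=\{x,-1\}$ --- and hence the step that actually lowers the number of $\pi$'s --- is legitimately available; this causes no difficulty, since both multilinearity and graded-commutativity already descend from $(K^{\times})^{\otimes n}$ to $K^M_n(K)$.
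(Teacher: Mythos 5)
Your argument is correct and is exactly the computation the paper has in mind: the paper gives no proof beyond asserting that the relations $\{x,-x\}=0$ and $\{x,x\}=\{x,-1\}$ imply the lemma, and your multilinear expansion of $a_i=\pi^{m_i}u_i$ followed by the reduction of repeated $\pi$'s via $\{\pi,\pi\}=\{\pi,-1\}$ and graded-commutativity is precisely how that implication goes. Note that what you actually prove --- generation by the symbols $\{\pi,u_2,\dots,u_n\}$ \emph{together with} the pure-unit symbols $\{u_1,\dots,u_n\}$ --- is the correct and clearly intended statement (and the one the paper uses immediately afterwards to define $\partial$ and $sp_\pi$); as literally written the lemma omits the unit symbols, which cannot be dispensed with, since for instance $sp_\pi$ annihilates every $\{\pi,u_2,\dots,u_n\}$ yet surjects onto $K^M_n(k)$.
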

Keeping the notation of Lemma \ref{lemmagenmilnork} and denoting the image of $u_i$ in $K^M_n(k)$ by $\bar u_i$, one can show that there exists a unique homomorphism
$$\partial:K^M_n(K)\r K^M_{n-1}(k)$$
satisfying
$$\partial(\{\pi,u_2,...,u_n\})= \{\bar u_2,...,\bar u_n\},$$
called the tame symbol, and a unique homomorphism
$$sp_\pi:K^M_n(K)\r K^M_n(k)$$
satisfying
$$sp_\pi(\{\pi^{i_1}u_1,\pi^{i_2}u_2,...,\pi^{i_n}u_n\})= \{\bar u_1,...,\bar u_n\},$$
called the specialisation map. Note that $sp_\pi$, unlike $\partial$, depends on the local parameter $\pi$ in $K$ and that $\partial(\{u_1,u_2,...,u_n\})=0$, if $u_i\in A^{\times}$ for all $1\leq i \leq n$.

We now return to the situation of the introduction: let $\mathcal{O}_K$ be an excellent henselian discrete valuation ring with quotient field $K$ and residue field $k=\mathcal{O}_K/\pi\mathcal{O}_K$ and always assume that $1/n\in k^\times$. 
Let $X$ be a smooth and projective scheme over Spec$\mathcal{O}_K$ of fiber dimension $d$. Let $X_K$ denote the generic fiber and $X_0$ the reduced special fiber. By $X_{(p)}$ we denote the set of points $x\in X$ such that $\text{dim}(\overline{\{x\}})=p$, where $\overline{\{x\}}$ denotes the closure of $x$ in $X$.

We use the following notation for Rost's Chow groups with coefficients in Milnor K-theory (see \cite[Sec. 5]{Ro96}):
$$C_p(X,m)=\bigoplus_{x\in X_{(p)}}(K_{m+p}^Mk(x))\otimes \mathbb{Z}/n\Z,$$
$$Z_p(X,m)=\text{ker}[\partial:C_p(X,m)\rightarrow C_{p-1}(X,m)],$$
$$A_p(X,m)=H_p(C_*(X,m))$$
and similarly for $X_0$ (resp. $X_K$) replacing $X$ by $X_0$ (resp. $X_K$).  Here the map $\partial$ is defined using the tame symbol on Milnor K-theory for discrete valuation rings defined above, the normalization and the norm map (see \cite{Ka86}). Furthermore, let
$$C_p^g(X,m)=\bigoplus_{x\in X^g_{(p)}}(K_{m+p}^Mk(x))\otimes \Z/n\Z$$
and
$$Z_p^g(X,m)=\text{ker}[\partial:C_p^g(X,m)\rightarrow C_{p-1}(X,m)]$$
be the corresponding groups supported on cycles in good position, i.e. the sum is taken over all $x\in X_{(p)}$ such that $\overline{\{x\}}$ is flat over $\O_K$.
Note that $C_k(X,-k)=Z_k(X)\otimes \Z/n\Z$, the group of $k$-cycles on $X$, i.e. the free abelian group generated by $k$-dimensional closed subschemes of $X$, tensored with $\Z/n\Z$.

Let now $\pi$ be a local parameter of $\O_K$. We define the restriction map
$$res_\pi: C_p(X,m)\rightarrow C_{p-1}(X_0,m+1)$$
to be the composition
$$res_\pi: C_p(X,m)\rightarrow C_{p-1}(X_K,m+1)\xrightarrow{\cdot\{-\pi\}} C_{p-1}(X_K,m+2)\xrightarrow{\partial}C_{p-1}(X_0,m+1),$$
where $C_p(X,m)\rightarrow C_{p-1}(X_K,m+1)$ is defined to be the identity on all elements supported on $X_{(p)}\setminus {X_0}_{(p)}$ and zero on ${X_0}_{(p)}$ and $\partial$ is the boundary map induced by the tame symbol. For the fact that this composition is compatible with the corresponding cycle complexes see \cite[Sec. 4]{Ro96}. For more details see \cite[Sec. 2]{Lu16}. We just recall that the restriction map $res_\pi$ depends on the choice of $\pi$ but the induced map $res: A_p(X,m)\rightarrow A_{p-1}(X_0,m+1)$ is independent of such a choice. From now on, we will fix a uniformizing parameter $\pi\in\O_K$ and write $res$ instead of $res_\pi$. 

We now prove the identifications of the two versions of Chow groups of zero-cycles with coefficients in Milnor K-theory stated in the introduction.
\begin{proposition}\label{identificationwithZero-cycles with coefficients in Milnor K-theory}
For all $j\geq 0$, there are the following isomorphisms:
\begin{enumerate}
\item $\CH^j(X_0,j-d)\cong A_0(X_0,j-d)$.
\item $\CH^j(X,j-d)_\Lambda\cong A_1(X,j-d-1)$.

\end{enumerate}
\end{proposition}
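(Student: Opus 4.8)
The plan is to prove the two identifications by comparing the Gersten-type complexes that compute higher Chow groups in weight $j$ and degree $j-d$ (or $2j-i$ with $i-j=d$) with Rost's cycle complexes $C_*(-,m)$ with coefficients in Milnor K-theory.

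For part (1), I would use the fact that for a scheme $X_0$ essentially of finite type over a field, the higher Chow group $CH^j(X_0,n)$ is computed by Bloch's cycle complex $z^j(X_0,\bullet)$, and that in the range of interest — top weight relative to the dimension — Nesterenko--Suslin--Totaro's theorem identifying Milnor K-theory with motivic cohomology of a field, together with the coniveau/Gersten spectral sequence, degenerates it to a short complex. Concretely: $CH^j(X_0,j-d)$ sits at the end of the Gersten complex $\bigoplus_{x\in X_0^{(d-1)}}H^{?}(k(x),\Z(j))\to\bigoplus_{x\in X_0^{(d)}}H^{?}(k(x),\Z(j))\to 0$, and the relevant motivic cohomology groups of the residue fields $k(x)$ of points $x\in X_0^{(p)}$ are $H^{j-p}(k(x),\Z(j-p))=K^M_{j-p}k(x)$ by Nesterenko--Suslin--Totaro; for $p=d$ this gives $K^M_{j-d}k(x)$ and for $p=d-1$ it gives $K^M_{j-d+1}k(x)$. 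The differential is the tame symbol (this is the identification of the Gersten differential in top weight with Rost's $\partial$, as in \cite{Ro96}), so the cokernel is exactly $A_d(X_0,j-d)\cong CH^j(X_0,j-d)$, which is the asserted coker. One subtlety is that $X_0$ may be singular/non-reduced, but since Chow groups and the cycle complex only see the underlying reduced scheme and its points, and the formula on the right is likewise defined via points, this is harmless.

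For part (2) the same strategy applies over the two-dimensional-type base $X$, but now there are points of dimension $d-1,d,d+1$ contributing, so the complex is $C_{d+1}\to C_d\to C_{d-1}$ in Rost's indexing, i.e. $A_1(X,j-d-1)$ in the reindexed notation $C_p(X,m)=\bigoplus_{x\in X_{(p)}}K^M_{m+p}k(x)$ with $p=d+1,d,d-1$ and $m=j-d-1$. I would argue that, with $\Lambda=\Z/n\Z$ coefficients and $1/n\in k$, the higher Chow group $CH^j(X,j-d)_\Lambda$ is computed by the corresponding Gersten complex on $X$; this is where one invokes — as the text explicitly flags — the Gersten conjecture for higher Chow groups over a henselian DVR, which is known \emph{with finite coefficients} (via the comparison with \'etale cohomology / the Beilinson--Lichtenbaum range, or Geisser's work), whereas integrally it is open. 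Granting Gersten with $\Z/n$-coefficients, the coniveau spectral sequence for $CH^\bullet(X,\bullet)_\Lambda$ degenerates in top weight to give $CH^j(X,j-d)_\Lambda\cong H$ of the three-term complex of the $H^*(k(x),\Z/n(\ast))$, and again Nesterenko--Suslin--Totaro (now with finite coefficients, equivalently Bloch--Kato/norm residue) identifies these with $K^M_*k(x)\otimes\Z/n$ and the differentials with the tame symbols. Thus $CH^j(X,j-d)_\Lambda\cong A_1(X,j-d-1)$.

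The main obstacle is precisely the input that is being black-boxed in part (2): establishing (or correctly citing) the Gersten conjecture for higher Chow groups with finite coefficients over a henselian excellent DVR, so that the coniveau filtration on $CH^j(X,j-d)_\Lambda$ actually collapses to the claimed three-term complex. Once that is in hand, the rest is the now-standard dictionary between the top-weight piece of the motivic coniveau spectral sequence and Rost's cycle modules, together with Nesterenko--Suslin--Totaro; I would present those steps briefly and refer to \cite{Ro96} for the compatibility of differentials.
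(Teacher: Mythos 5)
Part (1) of your proposal is essentially the paper's argument: Bloch's coniveau spectral sequence for $X_0$, the identification $CH^r(k(x),r)\cong K^M_r(k(x))$ of Nesterenko--Suslin--Totaro, and the vanishing $CH^s(k(x),t)=0$ for $s>t$ to kill everything except the two-term tail. That part is fine (and, as you note, works integrally).

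For part (2) there is a genuine gap at exactly the step you black-box, and the black box you reach for is not quite the right one. The coniveau spectral sequence for $X/\O_K$ (which exists unconditionally from Levine's localization plus a limit argument --- no Gersten input is needed for its existence) has, in total degree $d-j$, \emph{two} potentially nonzero graded pieces: $E_2^{d,-j}$, which is the homology of the three-term Milnor K-theory complex, i.e. $A_1(X,j-d-1)$, and $E_2^{d+1,-j-1}$, which is the cokernel of
$\oplus_{x\in X^{(d)}}CH^{j-d}(k(x),j-d+1)_\Lambda\r \oplus_{x\in X^{(d+1)}}CH^{j-d-1}(k(x),j-d)_\Lambda$
on the row \emph{below} the Milnor K-theory row. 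The whole content of part (2) is that this second piece vanishes. Gersten exactness for the local rings of $X$ (or for $\O_K$ itself) does not give this: exactness of a Gersten complex as a complex of sheaves says nothing about the surjectivity of the last map on global sections, which is an $H^{d+1}_{\mathrm{Zar}}$-type obstruction. What is actually needed, and what the paper proves, is the surjectivity of this specific residue map onto the contribution of the closed points: for $x\in X^{(d+1)}$ one chooses a regular lift $Z$ of $x$ flat over $\O_K$ with generic point $y$; then $\O_{Z,x}$ is a henselian discrete valuation ring, Beilinson--Lichtenbaum converts the two higher Chow groups into $H^{j-d-1}(k(y),\Lambda(j-d))$ and $H^{j-d-2}(k(x),\Lambda(j-d-1))$, rigidity for \'etale cohomology identifies $H^{*}(k(x),\Lambda(\ast))$ with $H^{*}(\O_{Z,x},\Lambda(\ast))$, and cupping a lift $\alpha'$ of a given class with the class of a uniformizer $s$ produces $\alpha'\cup s$ with $\partial(\alpha'\cup s)=\alpha$ (cf. \cite[Lem. 1.4 (2)]{Ka86}). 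If you want to phrase this as ``Gersten with finite coefficients,'' you must (i) apply it to the one-dimensional henselian local rings $\O_{Z,x}$ of flat lifts, not to $\O_K$, (ii) observe that only the surjectivity of the boundary map in that sequence is used, and (iii) supply the lifting-of-points step that reduces the global surjectivity to these local statements. As written, your proposal asserts that the coniveau filtration ``collapses'' granted Gersten, without identifying the offending $E_2^{d+1,-j-1}$ term or explaining why the cited input kills it; note also that the paper's own Remark is explicit that the Gersten conjecture for the base DVR remains open integrally and is precisely \emph{not} the ingredient used in the proof with finite coefficients.
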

\begin{proof}
In \cite[Sec. 10]{Bl86}, Bloch proves the existence of the spectral sequence
\begin{equation}\label{spseq1}
^{\CH}E^{p,q}_1=\bigoplus_{x\in X_0^{(p)}}\CH^{r-p}(\text{Spec}k(x),-p-q)\Rightarrow \CH^r(X_0,-p-q).
\end{equation} 
Using the localization sequence for higher Chow groups for schemes over a regular noetherian base of dimension $\leq 1$ (see \cite{Le01}) and a limit argument, one also gets the existence of the spectral sequence 
\begin{equation}\label{spseq2}
^{\CH}E^{p,q}_1=\bigoplus_{x\in X^{(p)}}\CH^{r-p}(\text{Spec}k(x),-p-q)\Rightarrow \CH^r(X,-p-q).
\end{equation} 

Setting $j=r$, and using the isomorphism $\sigma:K^M_r(k(x))\xrightarrow{\cong} \CH^r(k(x),r)$ shown in \cite{NS89} and \cite{To92}, it follows from spectral sequence (\ref{spseq1}) that $\CH^j(X_0,j-d)$ is isomorphic to the cokernel of 
$$\partial:\bigoplus_{x\in X_0^{(d-1)}}K_{j-d+1}^Mk(x)\xrightarrow{} \bigoplus_{x\in X_0^{(d)}}K_{j-d}^Mk(x)$$
since $E^{\bullet, \geq -j+1}_1=0$ as $\CH^s(k(x),t)=0$ for $s>t$. Note that $\partial$ commutes with $\sigma$ and the differential coming from spectral sequence (\ref{spseq1}). This follows from \cite[Lem. 3.2]{GL00}. Lemma 3.2 of \textit{loc. cit.} is formulated for a discrete valuation ring over a field but the proof works exactly the same way for an arbitrary discrete valuation ring. We will use this in the proof of $(2)$, without further mentioning it, to identify the differentials coming from spectral sequence (\ref{spseq2}) with $\partial$ in the relative situation.

Similarly, we get from spectral sequence (\ref{spseq2}) that $\CH^j(X,j-d)_\Lambda$ is isomorphic to $A_1(X,j-d-1)$, noting that the map
$$d_1^{d,-j-1}:\bigoplus_{y\in X^{(d)}}\CH^{j-d}(k(y),j-d+1)_\Lambda\r \bigoplus_{x\in X^{(d+1)}}\CH^{j-d-1}(k(x),j-d)_\Lambda$$
is surjective for all $j$ (see Figure \ref{figure2}). This can be seen as follows: Let $x\in X^{(d+1)}$ and let $y$ be the generic point of a regular lift $Z$ of $x$ to $X$ which is flat over $\O_K$. By the Beilinson-Lichtenbaum conjecture (see Theorem \ref{Beillichenbaum}) there are isomorphisms 
$$\rho^{j-d,j-d+1}:\CH^{j-d}(k(y),j-d+1)_\Lambda\xrightarrow{\cong}H_{}^{j-d-1}(k(y),\Lambda(j-d))$$ 
and 
$$\rho^{j-d-1,j-d}:\CH^{j-d-1}(k(x),j-d)_\Lambda\xrightarrow{\cong}H_{}^{j-d-2}(k(x),\Lambda(j-d-1)).$$ 
The assertion now follows from the surjectivity of the map
$$\partial:H_{}^{j-d-1}(k(y),\Lambda(j-d))\r H_{}^{j-d-2}(k(x),\Lambda(j-d-1)).$$
and the identity $\partial\circ \rho^{j-d,j-d+1}=\rho^{j-d-1,j-d} \circ d_1^{d,-j-1}$. In fact, $\partial$ is also a differential in the coniveau spectral sequence 
\begin{equation*}
^{\text{\'et}}E_1^{p,q}(X,\Lambda(j))=\oplus_{x\in X^p}H^{q-p}(k(x),\Lambda(j-p))\Rightarrow H_{\text{\'et}}^{p+q}(X,\Lambda(j)).
\end{equation*} and there is a map of coniveau spectral sequences $\rho^{p,q}_X: ^{\CH}E^{p,q}_1\r ^{\text{\'et}}E_1^{p,q+2j}$ induced by the \'etale cycle class map (see also Section \ref{sectionkatocomplexes}).
The surjectivity can be shown as follows: Since $\O_{Z,x}$ is henselian, $H_{}^{j-d-2}(k(x),\Lambda(j-d-1))\cong H_{}^{j-d-2}(\O_{Z,x},\Lambda(j-d-1))$ by rigidity for \'etale cohomology. An element $\alpha\in H_{}^{j-d-2}(k(x),\Lambda(j-d-1))$ corresponding to an element $\alpha'\in H_{}^{j-d-2}(\O_{Z,x},\Lambda(j-d-1))$ lifts to an element $\alpha'\cup s\in H_{}^{j-d-1}(k(y),\Lambda(j-d))$, $s$ being a generator of the maximal ideal of $\O_{Z,x}$, with $\partial(\alpha'\cup s)=\alpha$ (see also \cite[Lem. 1.4 (2)]{Ka86}).
\end{proof}

\begin{figure}[b]
$$\begin{xy} 
  \xymatrix@-1.5em{
  &  & d & d+1 \\
 -j+1 & 0 &      0   &   0      \\
 -j & ... & \bigoplus_{x\in X^{(d)}}\CH^{j-d}(k(x),j-d) \ar[r]^{}  &      \bigoplus_{x\in X^{(d+1)}}\CH^{j-d-1}(k(x),j-d-1) \\
 -j-1 & ... & \bigoplus_{x\in X^{(d)}}\CH^{j-d}(k(x),j-d+1) \ar[r]^{}  &   \bigoplus_{x\in X^{(d+1)}}\CH^{j-d-1}(k(x),j-d)    \\
  & ... &      ...   &   ...           
  }
\end{xy} $$ 
\caption{Table of $^{\CH}E^{p,q}_1$ for $X/\O_K$.}
\label{figure2}
\end{figure}

\begin{remark}
\begin{enumerate}
\item Proposition \ref{identificationwithZero-cycles with coefficients in Milnor K-theory}(1) is proved by the same argument in \cite[Thm. 5.5]{Ak04'}. We recall the proof for the convenience of the reader. For similar identifications for $X_0$ with motivic cohomology (with modulus) see also \cite{RS}.
\item In order to show the isomorphism 
$$ \CH^j(X,j-d)\cong              
H^1(\bigoplus_{x\in X^{(d-1)}}K_{j-d+1}^Mk(x)\rightarrow \bigoplus_{x\in X^{(d)}}K_{j-d}^Mk(x) \r \bigoplus_{x\in X^{(d+1)}}K_{j-d-1}^Mk(x))$$
integrally, one would need to show the Gersten conjecture for a henselian DVR for higher Chow groups, i.e. the exactness of the sequence
$$0\r \CH^r(\mathrm{Spec}A,q)\r \CH^r(\mathrm{Spec}K,q)\r \CH^{r-1}(\mathrm{Spec}k,q-1)\r 0$$
for a henselian discrete valuation ring $A$ with field of fractions $K$ and residue field $k$.
\item Let $A$ be as in $(2)$. If $k$ is of characteristic $p>0$, then the sequence 
$$0\r \CH^r(A, \Z/p^r\Z, q)\r \CH^r(K, \Z/p^r\Z, q)\r \CH^{r-1}(k, \Z/p^r\Z, q-1)\r 0$$
is exact. This follows from the fact that in this case 
$\CH^{r-1}(\mathrm{Spec}k, \Z/p^r\Z, q-1)=0$ for $r\neq q$ by \cite[Thm. 1.1]{GL00} and that
$$\CH^r(K, \Z/p^r\Z, r)\cong K^M_r(K)/p^r\r K^M_{r-1}(k)/p^r\cong \CH^{r-1}(k, \Z/p^r\Z, r-1)$$
is surjective which implies that the long exact localization sequence
$$...\r \CH^r(A, \Z/p^r\Z, q)\r \CH^r(K, \Z/p^r\Z, q)\r \CH^{r-1}(k, \Z/p^r\Z, q-1)\r ...$$
splits (see also \cite[Cor. 4.3]{Ge04}).
\item If $k$ is finite, then $\CH^j(X_0,j-d)=0$ for $j>d+1$ since $K^M_2(k)=0$ for $n\geq 2$ (see also \cite[Cor. 7.1]{Ak04'} and \cite[Thm.7.1(1)]{KaS86}). If $K$ is a local field, then $\CH^j(X_K,j-d)_\Lambda=0$ for $j>d+2$ since $K^M_n(K)$ is uniquely divisible for $n\geq 3$ (see \cite[VI. 7.1]{We13}). 
\end{enumerate}
\end{remark}

\section{Relation with Kato complexes}\label{sectionkatocomplexes}
In this section we recall some facts about the Kato conjectures which we will need in the following sections.

Let $X$ be an excellent scheme. In \cite{Ka86}, Kato defines the following complexes:
\begin{multline*}
C^i_n(X): ...\r \bigoplus_{x\in X_a}H^{i+a+1}(k(x),\Z/n(i+a))\r ...\r 
\bigoplus_{x\in X_1}H^{i+2}(k(x),\Z/n(i+1))\\ \r \bigoplus_{x\in X_0}H^{i+1}(k(x),\Z/n(i))
\end{multline*}
Here the term $\oplus_{x\in X_a}H^{i+a+1}(k(x),\Z/n(i+a))$ is placed in degree $a$. We denote the homology of $C^i_n(X)$ in degree $a$ by $KH^i_a(X,\Z/n\Z)$. The groups $H^{i+a+1}(k(x),\Z/n(i+a))$ are the \'etale cohomology groups of $\Spec k(x)$ with coefficients in $\Z/n(i+a):=\mu^{\otimes i+a}_n$ if $n$ is invertible on $X$ and $\Z/n(i+a):=W_r\Omega^{i+a}_{X_1,\mathrm{log}}[-(i+a)]\oplus\Z/m(i)$ if $n=mp^r, (m,p)=1,$ is not invertible on $X$ and $X$ is smooth over a field of characteristic $p$.

The complex $C^0_n(X)$ for a proper smooth scheme $X$ over a finite field or the ring of integers in a ($1$-)local field is the subject of the study of the Kato conjectures. The Kato conjectures say the following and have been fully proved in case the coefficient characteristic is invertible on $X$ by Jannsen, Kerz and Saito (see \cite[Thm. 8.1]{KeS12}):

\begin{conj}(\cite[Conj. 0.3]{Ka86})\label{Katoconjfinitefield}
Let $X$ be a proper smooth scheme over a finite field. Then
$$KH^0_a(X,\Z/n\Z)=0 \quad \text{   for } a> 0.$$
\end{conj}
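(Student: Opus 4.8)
This is a theorem of Kerz and Saito \cite{KeS12}, and I would follow their argument, which runs by induction on $d=\dim X$. For $d=0$ the complex $C^0_n(X)$ is concentrated in degree $0$ and there is nothing to prove. For $d=1$, with $X$ a smooth projective curve over a finite field $\mathbb{F}$, the complex $C^0_n(X)$ becomes --- after the identifications $H^2(k(\eta),\Z/n(1))=\mathrm{Br}(\mathbb{F}(X))[n]$ for the generic point $\eta$ and $H^1(k(x),\Z/n(0))=\Z/n\Z$ for the closed points $x$ (and similarly, using logarithmic de Rham--Witt cohomology, for the $p$-primary part) --- the sum-of-residues homomorphism
$$\mathrm{Br}(\mathbb{F}(X))[n]\longrightarrow\bigoplus_{x\in X_0}\Z/n\Z ,$$
whose injectivity is exactly the injectivity part of the Brauer--Hasse--Noether exact sequence for the global function field $\mathbb{F}(X)$ (the residues corresponding to the local invariant maps). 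Hence $KH^0_1(X,\Z/n\Z)=0$, and the base case is classical class field theory.

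For the inductive step let $d\ge 2$. The geometric input is the Bertini theorem over finite fields (Poonen, together with the refinements --- due to Gabber and to Kerz--Saito --- that allow prescribing the behaviour of the section along a fixed subscheme): one chooses a hypersurface section $Y\subset X$ of sufficiently large degree which is smooth projective of dimension $d-1$, with smooth affine complement $U=X\setminus Y$. Since the closure of a point of $Y$ stays in $Y$, the summands of $C^0_n(X)$ indexed by points of $Y$ form a subcomplex, canonically identified with $C^0_n(Y)$, whose quotient is $C^0_n(U)$; the short exact sequence $0\to C^0_n(Y)\to C^0_n(X)\to C^0_n(U)\to 0$ yields a localization long exact sequence
$$\cdots\to KH^0_a(Y,\Z/n\Z)\to KH^0_a(X,\Z/n\Z)\to KH^0_a(U,\Z/n\Z)\to KH^0_{a-1}(Y,\Z/n\Z)\to\cdots .$$
By the inductive hypothesis $KH^0_a(Y,\Z/n\Z)=0$ for $a\ge 1$, so $KH^0_a(X,\Z/n\Z)$ injects into $KH^0_a(U,\Z/n\Z)$ for all $a\ge1$, and it suffices to show that every positive-degree Kato class on the \emph{proper} scheme $X$ restricts to $0$ on a sufficiently generic affine open. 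When such a class is represented by a cocycle supported on a \emph{smooth} closed subset $Z\subsetneq X$ this is transparent: by the refined Bertini theorem one may arrange $Y\supseteq Z$, so that the cocycle lies in the subcomplex $C^0_n(Y)$ and defines a lift in $KH^0_a(Y,\Z/n\Z)=0$, whence the class vanishes. (One also reduces the proper case to the projective one by Chow's lemma and de Jong alterations.)

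The hard part is everything else: classes supported on singular or top-dimensional strata, which is precisely the content of Kerz--Saito's \emph{Lefschetz condition}. Carrying it out requires several things. First, one runs the induction not for smooth projective $X$ alone but for smooth quasi-projective schemes equipped with a normal crossing boundary, so that the localization bookkeeping closes. Second, one must produce over a finite field hypersurface sections that are simultaneously smooth, in good position with respect to a given normal crossing divisor, and ``Lefschetz'' in the sense that the relevant restriction maps on \'etale cohomology of the function fields involved are surjective in a range of degrees. Third, one applies weight estimates from Deligne's proof of the Weil conjectures to the abutment of the niveau spectral sequence for \'etale (Borel--Moore) homology --- one of whose $E_1$-rows is the Kato complex $C^0_n(X)$ --- using that smooth proper schemes have pure cohomology whereas affine ones have cohomology concentrated in a half-space of weights. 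Finally, resolution of singularities for curves and surfaces, together with de Jong alterations, is used to dispose of singular supports. Assembling these ingredients is the substance of \cite{KeS12}, and the Bertini/Lefschetz step over a finite field is the crux I would expect to be the main obstacle.
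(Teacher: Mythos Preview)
The paper does not prove this statement: it is recorded as a conjecture of Kato and the paper simply remarks that it ``has been fully proved by Jannsen, Kerz and Saito (see \cite{KeS12})'', and then \emph{uses} it (e.g.\ in the proof of Proposition~\ref{katofinitefieldcase}). So there is no argument in the paper to compare against; your proposal goes well beyond what the paper itself supplies.

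That said, what you have written is a fair high-level summary of the Kerz--Saito strategy: induction on $\dim X$, the base case $d=1$ being the Brauer--Hasse--Noether theorem for global function fields, the localization triangle $C^0_n(Y)\to C^0_n(X)\to C^0_n(U)$ coming from a good hypersurface section, the need for refined Bertini theorems over finite fields (Poonen, Gabber, Kerz--Saito) producing sections satisfying a Lefschetz-type condition, weight arguments via Deligne's theorem applied through the niveau spectral sequence, and the use of alterations to handle singular supports. You are also right that the ``Lefschetz condition'' step is the technical heart and the part that requires the most work to make precise. If this were meant as an actual proof rather than a roadmap, the gap is exactly there: saying ``every positive-degree Kato class on $X$ restricts to $0$ on a sufficiently generic affine open'' is a restatement of the goal, not an argument, and your parenthetical ``transparent'' case (cocycle supported on a smooth proper $Z\subsetneq X$) does not cover the generic situation. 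But as a sketch pointing to \cite{KeS12}, it is accurate and more informative than the paper's bare citation.
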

\begin{conj}(\cite[Conj. 5.1]{Ka86})\label{Katoconjlocalring}
Let $X$ be a regular scheme proper and flat over $\mathrm{Spec}(\O_k)$, where $\O_k$ is complete discrete valuation ring with finite residue field. Then
$$KH^0_a(X,\Z/n\Z)=0 \quad\text{   for } a\geq 0.$$
\end{conj}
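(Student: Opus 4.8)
The plan is to deduce this from the Kato conjecture over finite fields (Conjecture~\ref{Katoconjfinitefield}, already a theorem of Jannsen--Kerz--Saito) together with the analogous cohomological Hasse principle over local fields, the bridge being the localization of Kato complexes along the special and generic fibres. Write $\mathbb{F}$ for the (finite) residue field of $\O_k$ and $L$ for its fraction field, and set $Y=X_0$ (the reduced special fibre, projective over $\mathbb{F}$) and $U=X_K$ (the generic fibre, smooth projective over $L$). Since $X$ is the topological union of the closed subscheme $Y$ and its open complement $U$, and the residue maps defining the Kato complexes respect this decomposition, one obtains a short exact sequence of complexes
$$0\r C^0_n(Y)\r C^0_n(X)\r E_\bullet\r 0,$$
where $E_\bullet$ has $E_a=\bigoplus_{x\in U_{a-1}}H^{a+1}(k(x),\Z/n(a))$; that is, $E_\bullet$ is $C^1_n(U)$ with degrees shifted up by one, the jump from weight $0$ to weight $1$ and the shift recording that a dimension-$b$ point of $U$ has dimension $b+1$ in $X$ and that $L$ has cohomological dimension $2$. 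Since $H_a(E_\bullet)=KH^1_{a-1}(U,\Z/n)$, taking homology yields the long exact sequence
$$\cdots\r KH^0_a(Y,\Z/n)\r KH^0_a(X,\Z/n)\r KH^1_{a-1}(U,\Z/n)\r KH^0_{a-1}(Y,\Z/n)\r\cdots.$$

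Next I would feed in the two class field theory statements. By Conjecture~\ref{Katoconjfinitefield} one has $KH^0_a(Y,\Z/n)=0$ for $a>0$, while for $Y$ connected $KH^0_0(Y,\Z/n)\cong\Z/n$ through the degree map assembled from reciprocity over the finitely many closed points. Dually, higher class field theory for a smooth projective variety over the local field $L$ gives $KH^1_b(U,\Z/n)=0$ for $b>0$ and $KH^1_0(U,\Z/n)\cong\Z/n$ through the invariant map on $\operatorname{Br}(L)[n]$. For $a\ge 2$ both neighbours of $KH^0_a(X,\Z/n)$ in the long exact sequence then vanish, so $KH^0_a(X,\Z/n)=0$; and the remaining tail
$$0\r KH^0_1(X,\Z/n)\r KH^1_0(U,\Z/n)\xrightarrow{\ \phi\ }KH^0_0(Y,\Z/n)\r KH^0_0(X,\Z/n)\r 0$$
exhibits $\phi$, the connecting ``vertical residue'', as the residue isomorphism $\operatorname{Br}(L)[n]\xrightarrow{\ \sim\ }H^1(\mathbb{F},\Z/n)$ of the local field under the two identifications above. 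Hence $KH^0_1(X,\Z/n)=KH^0_0(X,\Z/n)=0$ as well, and the non-connected case follows by running the argument on each connected component of $X$ (each of which is again regular, proper and flat over $\O_k$).

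I expect the main obstacle to be the input ``$KH^1_b(U,\Z/n)=0$ for $b>0$'', the cohomological Hasse principle for a smooth projective variety over a local field: this is the genuinely new arithmetic content, and it is proved in \cite{KeS12} by the same machine as the finite-field case --- an induction on $\dim U$ in which a Bertini-type theorem cuts out a hypersurface section whose Kato homology controls that of $U$ in the relevant degrees, alterations in the sense of de Jong and Gabber resolve the singularities such sections create, a moving lemma makes the cycle-theoretic comparison go through, and the norm residue isomorphism (Beilinson--Lichtenbaum, Theorem~\ref{Beillichenbaum}) turns that comparison into one with \'etale cohomology, the base cases $\dim U\le 1$ being local class field theory and its curve analogue. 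The delicate bookkeeping sits in the low degrees $a=0,1$ and in the compatibility of all the residue maps along the induction; the Tate twists cause no trouble, since $n$ is prime to the residue characteristic $p$, so everything takes place in the prime-to-$p$ part.
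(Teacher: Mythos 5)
The paper itself offers no proof of this statement: it is quoted as a theorem of Jannsen, Kerz and Saito, with a bare citation to \cite{KeS12}, so you are attempting to reprove a substantial result rather than reproduce an argument from the text. Your homological-algebra skeleton is correct and is indeed the classical first step, going back to Kato's original paper: since the points of $X$ split into those of the closed fibre $Y=X_0$ and those of the generic fibre $U=X_K$ (with a dimension shift of one and a weight shift of one), $C^0_n(Y)$ is a subcomplex of $C^0_n(X)$ with quotient a shift of $C^1_n(U)$, and the long exact sequence relating $KH^0_*(Y)$, $KH^0_*(X)$ and $KH^1_{*-1}(U)$ is exactly as you write it.

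The argument collapses, however, at the point where you claim that for $a\geq 2$ both neighbours of $KH^0_a(X,\Z/n\Z)$ vanish. Conjecture \ref{Katoconjfinitefield} applies to \emph{smooth} proper schemes over finite fields, whereas the special fibre of a regular proper flat $X/\O_k$ is in general singular (e.g.\ a normal crossings divisor in the semistable case), and for singular proper varieties the Kato homology does not vanish in positive degrees: by Kerz--Saito it computes a weight homology, essentially the homology of the dual complex of the special fibre. Concretely, if $X$ is the minimal regular model of a Tate elliptic curve with $I_2$-reduction, then $Y$ is a cycle of two $\mathbb{P}^1$'s meeting in two rational points, and a Faddeev-sequence computation produces a nonzero class in $KH^0_1(Y,\Z/n\Z)\cong \Z/n\Z$ (two Brauer classes on the two components whose residues cancel at the nodes). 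Correspondingly your second input is also false: for the same Tate curve $U$ over $\Q_p$ one has $KH^1_1(U,\Z/n\Z)\cong\Z/n\Z\neq 0$, i.e.\ the cohomological Hasse principle for $H^3(L(U),\Z/n\Z(2))$ fails for smooth projective curves over local fields with non--tree-like reduction graph (this failure is precisely Saito's class field theory of such curves). The actual content of Conjecture \ref{Katoconjlocalring} is that the connecting homomorphism $KH^1_b(U,\Z/n\Z)\to KH^0_b(Y,\Z/n\Z)$ in your sequence is an isomorphism for every $b\geq 0$ --- the failure of the Hasse principle on the generic fibre is exactly matched by the combinatorics of the special fibre --- and this cannot be obtained by showing that both sides vanish. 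Establishing that matching is where \cite{KeS12} does the work, by running the Bertini--alterations--Beilinson--Lichtenbaum induction directly on $X$ over $\mathrm{Spec}(\O_k)$; the local-field Hasse principle you propose to feed in is, where it is known, a \emph{consequence} of this theorem rather than an available input, so even after correcting the vanishing claims the reduction would be circular.
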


Note that Conjecture \ref{Katoconjlocalring} is shown in \cite[Thm. 8.1]{KeS12} more generally for $\O_k$ a henselian discrete valuation ring with finite residue field. In \cite[Lem. 6.2]{JS}, Jannsen and Saito relate the complex $C^0_n(X)$ for a smooth scheme $X$ over a finite field to the \'etale cycle class map
$$\rho^{r,2r-s}_X:\CH^r(X,2r-s)_{\Lambda}\r H_{\text{\'et}}^{s}(X,\Lambda(r))$$
for $r=d$. More precisely, they show that there is an exact sequence 
\begin{equation}\label{JSexactsequence}
\begin{split}
...\r KH^0_{q+2}(X,\Z/n\Z)\r \CH^d(X,q)_{\Lambda}\r H_{\text{\'et}}^{2d-q}(X,\Lambda(d))\\
\r KH^0_{q+1}(X,\Z/n\Z)\r \CH^d(X,q-1)_{\Lambda}\r...
\end{split}
\end{equation}
This sequence is a tool to deduce finiteness results for Chow groups of higher zero cycles with finite coefficients from the Kato conjectures. The proof of the exactness of (\ref{JSexactsequence}) uses the coniveau spectral sequence for the domain and target of $\rho^{r,q}_X$ and the following theorem of Voevodsky:
\begin{theorem}(Beilinson-Lichtenbaum conjecture, see \cite{Vo11})\label{Beillichenbaum}
Let $X$ be a smooth scheme over a field. Then the \'etale cycle map
$$\rho^{r,2r-s}_X:\CH^r(X,2r-s)_{\Lambda}\r H_{\et}^{s}(X,\Lambda(r))$$
is an isomorphism for $s\leq r$.
\end{theorem}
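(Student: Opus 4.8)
The plan is to deduce this from the norm residue isomorphism theorem (the Bloch--Kato conjecture) via the standard comparison between Bloch's higher Chow groups and motivic cohomology. First I would rewrite $CH^r(X,2r-s)_\Lambda$ as motivic cohomology with finite coefficients $H^s(X,\Lambda(r))$ (Voevodsky; Mazza--Voevodsky--Weibel); under this identification the cycle map $\rho^{r,2r-s}_X$ becomes the canonical change-of-topology map $H^s(X,\Z/n(r))\to H^s_{\mathrm{et}}(X,\Z/n(r))$, and since the residue fields of $X$ have characteristic prime to $n$ (or, at the $p$-part, after invoking Geisser--Levine) the étale motivic complex $\Z/n(r)$ is quasi-isomorphic to $\mu_n^{\otimes r}$, so that the target becomes $H^s_{\mathrm{et}}(X,\mu_n^{\otimes r})$. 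Thus the theorem is precisely the assertion that the motivic complex $\Z/n(r)$ maps quasi-isomorphically onto the truncation $\tau_{\le r}$ of its étale realisation $R\epsilon_*\mu_n^{\otimes r}$ (with $\epsilon$ the projection from the étale to the Zariski site), i.e. the Beilinson--Lichtenbaum conjecture.

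Next I would reduce to the case of a field. The complex $\Z/n(r)$ and the truncated complex $\tau_{\le r}R\epsilon_*\mu_n^{\otimes r}$ both satisfy the Gersten/Bloch--Ogus formalism over a smooth scheme, so the map $\rho$ is assembled on coniveau $E_1$-pages out of the analogous maps over the residue fields $k(x)$. It therefore suffices to show, for every field $F$ with $1/n\in F$, that $H^s(F,\Z/n(r))\to H^s_{\mathrm{et}}(F,\mu_n^{\otimes r})$ is an isomorphism for $s\le r$; recall that $H^s(F,\Z/n(r))=0$ for $s>r$ and $H^r(F,\Z/n(r))\cong K^M_r(F)/n$ by Nesterenko--Suslin--Totaro, so the borderline degree $s=r$ of this statement is exactly the assertion that the norm residue map $K^M_r(F)/n\to H^r_{\mathrm{et}}(F,\mu_n^{\otimes r})$ is an isomorphism.

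The essential input is hence the Bloch--Kato conjecture in all weights, which is the theorem of Voevodsky, Rost and Weibel. Granting it, the equivalence with Beilinson--Lichtenbaum (Geisser--Levine; Suslin--Voevodsky) upgrades the borderline-degree statement to an isomorphism in all degrees $s\le r$ over residue fields, through a descending induction on $s$ that uses a Bockstein together with the coniveau spectral sequence to express $H^s(F,\Z/n(r))$ in terms of the norm residue symbols of the residue fields of $F$; the reductions from composite $n$ to prime powers $\ell^\nu$ and from $\ell^\nu$ to $\ell$ (by the Bockstein), together with the logarithmic de Rham--Witt treatment of the $p$-part, are routine. The only genuinely hard step is the Bloch--Kato conjecture itself — its proof rests on motivic Steenrod operations, Rost's norm varieties and the degree formula — and for the present paper we do not reprove it but simply cite the completed theorem of Voevodsky.
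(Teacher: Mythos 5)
Your outline is correct and is essentially the argument behind the paper's treatment of this statement: the paper does not prove Theorem \ref{Beillichenbaum} but cites it as Voevodsky's theorem, and your reduction (higher Chow groups $=$ motivic cohomology, Gersten/coniveau reduction to fields, and the Suslin--Voevodsky/Geisser--Levine equivalence of Beilinson--Lichtenbaum with the Bloch--Kato conjecture, whose proof is the cited norm residue isomorphism theorem) is exactly the standard chain of results being invoked. Since both you and the paper ultimately black-box the norm residue isomorphism theorem, there is nothing further to compare.
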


We recall the following Proposition from \cite[Prop. 9.1]{KEW16}:

\begin{proposition}\label{katofinitefieldcase}
Let $X$ be a proper smooth scheme over a finite or algebraically closed field. Let $\Lambda=\Z/n\Z$ and $n$ be invertible on $X$. Then the \'etale cycle map
$$\rho^{j,j-d+a}_X:\CH^j(X,j-d+a)_{\Lambda}\r H_{\et}^{j+d-a}(X,\Lambda(j))$$
is an isomorphism for all $j\geq d$ and all $a$ except possibly if $k$ is finite, $j=d$ and $a=-1$. In particular the groups $\CH^j(X,j-d+a)_{\Lambda}$ are finite if $j\geq d, a\geq 0$. 
\end{proposition}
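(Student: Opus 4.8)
The plan is to compare the coniveau spectral sequences computing the two sides of $\rho^{j,j-d+a}_X$ by means of the Beilinson--Lichtenbaum theorem applied to the residue fields of $X$, and to settle the single remaining case with the Jannsen--Saito sequence (\ref{JSexactsequence}) together with the Kato conjecture over a finite field. Put $s=j+d-a$, so that $\rho^{j,j-d+a}_X$ is the \'etale cycle map $\rho_X\colon H^s_M(X,\Lambda(j))\to H^s_{\text{\'et}}(X,\Lambda(j))$, where $H^s_M(X,\Lambda(j))=CH^j(X,2j-s)_{\Lambda}$. Both motivic cohomology mod $n$ and \'etale cohomology with $\Lambda$-coefficients satisfy the Bloch--Ogus axioms on smooth $k$-schemes (purity needs $1/n\in k$), so there are coniveau spectral sequences
\begin{align*}
{}^{M}E_1^{p,q}&=\bigoplus_{x\in X^{(p)}}H^{q-p}_M\bigl(k(x),\Lambda(j-p)\bigr),\\
{}^{\text{\'et}}E_1^{p,q}&=\bigoplus_{x\in X^{(p)}}H^{q-p}_{\text{\'et}}\bigl(k(x),\Lambda(j-p)\bigr),
\end{align*}
converging to $H^{p+q}_M(X,\Lambda(j))$ and $H^{p+q}_{\text{\'et}}(X,\Lambda(j))$ respectively, and $\rho_X$ is induced by a morphism of spectral sequences whose effect on $E_1$ is the sum of the \'etale cycle maps $\rho_{k(x)}$ of the residue fields.

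In the rows $q\le j$ one has $q-p\le j-p$, so Theorem \ref{Beillichenbaum} shows that each $\rho_{k(x)}\colon H^{q-p}_M(k(x),\Lambda(j-p))\to H^{q-p}_{\text{\'et}}(k(x),\Lambda(j-p))$, hence $\rho$ on ${}^{M}E_1^{p,q}\to{}^{\text{\'et}}E_1^{p,q}$, is an isomorphism. In the rows $q>j$ the motivic term vanishes, since motivic cohomology of a field vanishes in degrees above its weight; and the \'etale term vanishes once $q-p$ exceeds $\mathrm{cd}\,k(x)$ for all $x\in X^{(p)}$, that is, once $q>d$ (when $k$ is algebraically closed) resp.\ $q>d+1$ (when $k$ is finite), because $k(x)$ has transcendence degree $d-p$ over $k$. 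As $j\ge d$, this last inequality is automatic for all $q>j$ whenever $k$ is algebraically closed, and whenever $k$ is finite with $j\ge d+1$. In those cases $\rho$ is an isomorphism on all of $E_1$, hence on $E_\infty$, hence on the abutment, and $\rho^{j,j-d+a}_X$ is an isomorphism for every $a$.

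There remains the case $k$ finite and $j=d$, which is the case $r=d$ of the Jannsen--Saito sequence (\ref{JSexactsequence}); setting $q=a$ there (so that $CH^d(X,q)_{\Lambda}=CH^j(X,j-d+a)_{\Lambda}$ and $H^{2d-q}_{\text{\'et}}(X,\Lambda(d))=H^{j+d-a}_{\text{\'et}}(X,\Lambda(j))$) it reads
\begin{equation*}
\cdots\to KH^0_{a+2}(X,\Z/n\Z)\to CH^j(X,j-d+a)_{\Lambda}\xrightarrow{\ \rho\ }H^{j+d-a}_{\text{\'et}}(X,\Lambda(j))\to KH^0_{a+1}(X,\Z/n\Z)\to\cdots .
\end{equation*}
By the Kato conjecture over a finite field (Conjecture \ref{Katoconjfinitefield}, proved in \cite{KeS12}), $KH^0_b(X,\Z/n\Z)=0$ for $b>0$. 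Hence for $a\ge0$ both $KH^0_{a+2}$ and $KH^0_{a+1}$ vanish and $\rho$ is an isomorphism; for $a=-1$ only $KH^0_{a+2}=KH^0_1$ vanishes, so $\rho$ is injective with cokernel a subgroup of $KH^0_0(X,\Z/n\Z)$, which is nonzero in general --- the excluded case; and for $a\le-2$ we have $CH^j(X,j-d+a)_{\Lambda}=0$ because $j-d+a<0$, while $H^{j+d-a}_{\text{\'et}}(X,\Lambda(j))=0$ because $j+d-a\ge 2d+2>\mathrm{cd}(X)$, so $\rho$ is trivially an isomorphism. This proves the isomorphism statement; the finiteness statement follows since for $j\ge d$, $a\ge0$ the excluded case does not occur, whence $CH^j(X,j-d+a)_{\Lambda}\cong H^{j+d-a}_{\text{\'et}}(X,\Lambda(j))$, and the latter is finite, $X$ being of finite type over a finite or algebraically closed field with $\Lambda$ finite. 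The only nonformal ingredient is the Kato conjecture over a finite field, entering in the case $j=d$; the rest is the Bloch--Ogus/Beilinson--Lichtenbaum comparison, whose whole point is that once $j\ge d$ the residue fields of $X$ have too small a cohomological dimension to create any discrepancy between the two coniveau spectral sequences.
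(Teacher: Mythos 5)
Your proof is correct and follows essentially the same route as the paper: compare the coniveau spectral sequences for motivic and \'etale cohomology via Theorem \ref{Beillichenbaum}, and observe that for $j\ge d$ the only possible discrepancy is the row $q=j+1$, which is the Kato complex and is killed by cohomological dimension except when $k$ is finite and $j=d$, where Conjecture \ref{Katoconjfinitefield} handles all but the last term. Your invocation of the Jannsen--Saito sequence (\ref{JSexactsequence}) for that last case is only a repackaging of the same spectral-sequence comparison, not a genuinely different argument.
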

\begin{proof}
We consider the spectral sequences
$$ ^{\CH}E^{p,q}_1(X)=\oplus_{x\in X^{(p)}}\CH^{j-p}(\text{Spec}k(x),-p-q)_\Lambda\Rightarrow \CH^j(X,-p-q)_\Lambda$$
and
\begin{equation}\label{speseqniveaucohomology}
^{\text{\'et}}E_1^{p,q}(X,\Lambda(j))=\oplus_{x\in X^p}H^{q-p}(k(x),\Lambda(j-p))\Rightarrow H_{\text{\'et}}^{p+q}(X,\Lambda(j)).
\end{equation} 
The \'etale cycle class map $\rho^{p,q}_X$ induces a map of spectral sequences
$$\rho^{p,q}_X: ^{\CH}E^{p,q}_1\r ^{\text{\'et}}E_1^{p,q+2j}$$
which by Theorem \ref{Beillichenbaum} is an isomorphism for $q\leq -j$. By cohomological dimension, the difference between the two spectral sequences is given by
$$^{\text{\'et}}E_1^{\bullet,j+1}=C^{j-d}_n(X)$$
which is equal to the zero-complex if $j>d$ or if $k$ is an algebraically closed field. If $j=d$, then the complex $^{\text{\'et}}E_1^{\bullet,d+1}=C^{0}_n(X)$ is exact except for possibly the last term on the right due to Conjecture \ref{Katoconjfinitefield}. 
\end{proof}

We now turn to the arithmetic case (see also \cite[Sec. 9]{KeS12}).
\begin{proposition}\label{katochetale}
Let $X$ be a proper smooth scheme over a henselian discrete valuation ring $\O_K$ with finite residue field $k$. Let $\Lambda=\Z/n\Z$ and $n$ be invertible on $X$. Let $d$ be the relative dimension of $X$ over $\O_K$. Then for $j=d+1$, there is an exact sequence
\begin{equation}
\begin{split}
...\r KH^{0}_{2+a}(X,\Z/n\Z)\r \CH^{d+1}(X,a)_{\Lambda}\r H_{\et}^{2d+2-a}(X,\Lambda(d+1))\\
\r KH^{0}_{1+a}(X,\Z/n\Z)\r \CH^{d+1}(X,a-1)_{\Lambda}\r...
\end{split}
\end{equation}
and for $j>d+1$, there are isomorphisms
$$\CH^j(X,j-d+a)_{\Lambda}\r H_{\et}^{j+d-a}(X,\Lambda(j)).$$
\end{proposition}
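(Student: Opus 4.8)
The plan is to run the same comparison-of-spectral-sequences argument as in Proposition~\ref{katofinitefieldcase}, but now for a regular scheme $X$ proper and flat over $\O_K$ with $k$ finite, so that the relevant cohomological dimension is one higher. First I would set up the coniveau spectral sequences
$$ ^{CH}E^{p,q}_1(X)=\bigoplus_{x\in X^{(p)}}CH^{j-p}(\mathrm{Spec}\,k(x),-p-q)\Rightarrow CH^j(X,-p-q)$$
and
$$ ^{\text{\'et}}E_1^{p,q}(X,\Lambda(j))=\bigoplus_{x\in X^{p}}H^{q-p}(k(x),\Lambda(j-p))\Rightarrow H_{\text{\'et}}^{p+q}(X,\Lambda(j)),$$
together with the map $\rho^{j,q}_X$ between them induced by the \'etale cycle class map. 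By the Beilinson--Lichtenbaum conjecture (Theorem~\ref{Beillichenbaum}), this is an isomorphism on $E_1$-terms in the range $q-p\le j-p$, i.e.\ for $q\le j$; so the two abutments can only differ through the rows with $q\ge j+1$. Here the residue fields $k(x)$ for $x\in X^{p}$ are either finite fields (if $x$ lies in the special fibre) or fields of transcendence degree over a finite field bounded by the dimension, so $k(x)$ has cohomological dimension at most $\dim\overline{\{x\}}+1$, which in the notation of the coniveau filtration means the $E_1^{p,q}$ term vanishes once $q-p>\,$(that bound)$\,+\,1$ — the extra $+1$ compared to the finite-field case of Proposition~\ref{katofinitefieldcase} being exactly the arithmetic dimension of $\O_K$.

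The key computation is that the only nonzero row beyond the Beilinson--Lichtenbaum range is $q=j+1$, and that this row is precisely the Kato complex $C^{j-d-1}_n(X)$. Indeed, on that row the $E_1$-term at column $p$ is $\bigoplus_{x\in X^{p}}H^{j+1-p}(k(x),\Lambda(j-p))$; writing $a$ for the dimension of $\overline{\{x\}}$, so that $p=d+1-a$, this becomes $\bigoplus_{x\in X_a}H^{(j-d-1)+a+1}(k(x),\Lambda((j-d-1)+a))$, which is by definition the degree-$a$ term of $C^{j-d-1}_n(X)$; one checks the differentials match (this is standard, cf.\ \cite{Ka86,JS}). Now for $j>d+1$ the complex $C^{j-d-1}_n(X)$ is the zero complex: its term at $x$ lives in $H^{(j-d-1)+a+1}(k(x),\cdot)$ with $j-d-1\ge1$, and since $\mathrm{cd}(k(x))\le a+1$ for the special-fibre points and $\le a+1$ as well for the generic-fibre points (there the arithmetic dimension of $K$ replaces the $+1$), the cohomological degree $(j-d-1)+a+1\ge a+2$ exceeds $\mathrm{cd}(k(x))$, forcing vanishing. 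Hence for $j>d+1$ the map $\rho^{j,q}_X$ is an isomorphism on $E_1$ in every row, so it is an isomorphism on the abutments, giving $CH^j(X,j-d+a)_\Lambda\xrightarrow{\sim}H^{j+d-a}_{\text{\'et}}(X,\Lambda(j))$ for all $a$.

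For $j=d+1$ the surviving row is $C^{0}_n(X)$, which is no longer zero but only acyclic by the Kato conjecture for $\O_K$ (Conjecture~\ref{Katoconjlocalring}, proved in \cite{KeS12}); equivalently $KH^0_a(X,\Z/n\Z)=0$ for all $a\ge0$. One then extracts the long exact sequence relating $CH^{d+1}(X,a)_\Lambda$, $H^{2d+2-a}_{\text{\'et}}(X,\Lambda(d+1))$ and the homology of $C^0_n(X)$ from the comparison of the two spectral sequences in exactly the manner of \cite[Lem.~6.2]{JS}: the cokernel/kernel of $\rho$ are governed by the differentials into and out of the single extra row, and these are read off as the terms $KH^0_{\ast}(X,\Z/n\Z)$. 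The main obstacle is purely bookkeeping: carefully matching the indexing conventions of the coniveau spectral sequence with Kato's degree convention for $C^i_n(X)$, verifying the differential on the row $q=j+1$ is indeed Kato's residue differential, and keeping track of cohomological dimension separately for the special fibre (finite $k$, arithmetic dimension $1$ over it) and the generic fibre (local field $K$, also arithmetic dimension $1$) — but no new geometric input beyond Beilinson--Lichtenbaum and the already-proven Kato conjecture is needed.
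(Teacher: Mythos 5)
Your proposal follows essentially the same route as the paper: compare the two coniveau spectral sequences via the cycle class map, use Beilinson--Lichtenbaum for the rows $q\le j$, identify the single surviving extra row $q=j+1$ with the Kato complex $C^{j-d-1}_n(X)$, and conclude by cohomological dimension (for $j>d+1$) or by extracting the long exact sequence as in \cite[Lem.~6.2]{JS} (for $j=d+1$). The only superfluous remark is the appeal to Conjecture~\ref{Katoconjlocalring} in the $j=d+1$ case --- the stated exact sequence does not require the Kato conjecture, which is only needed afterwards to deduce that $\rho_X$ is an isomorphism.
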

\begin{proof}
We keep the notation of the proof of Proposition \ref{katofinitefieldcase}. Like there, we get a map of spectral sequences
$$\rho^{p,q}_X: ^{\CH}E^{p,q}_1\r ^{\text{\'et}}E_1^{p,q+2j}$$
which by Theorem \ref{Beillichenbaum} is an isomorphism for $q\leq -j$. The difference between the two spectral sequences is given by
$$^{\text{\'et}}E_1^{\bullet,j+1}=C^{j-d-1}_n(X)$$
if $j\geq d+1$ since all other rows vanish by cohomological dimension and $C^{j-d-1}_n(X)=0$ for $j\geq d+2$ again by cohomological dimension. This implies the proposition.
\end{proof}

\begin{remark}
For $X$ be a scheme over an excellent henselian discrete valuation ring with finite residue field and $j=d$ we are in the situation of \cite{SS10} which is more complex since there are two rows ($^{\et}E_1^{\bullet,j+1}=C^{-1}_n(X)$ and $^{\et}E_1^{\bullet,j+2}$) which might not be quasi-isomorphic to zero. In \cite{SS10}, Saito and Sato show that $KH^{-1}_{a}(X,\Q_n/\Z_n)=0$ for $a=2,3$. 
\end{remark}

We keep the notation of Proposition \ref{katochetale}. It follows from Conjecture \ref{Katoconjlocalring} that 
$$res^{\CH}:\CH^{d+1}(X,a)_{\Lambda}\r \CH^{d+1}(X_0,a)_{\Lambda}$$
is an isomorphism for all $a$. In the next section we generalise this result for $a=1$ to arbitrary residue fields. It remains an open problem if $res^{\CH}:\CH^{d+1}(X,a)_{\Lambda}\r \CH^{d+1}(X_0,a)_{\Lambda}$ is an isomorphism for arbitrary residue fields for all $a$. 
\begin{conj}
Let $X$ be as in the introduction. Then the restriction map $$res^{\CH}:\CH^i(X,j)_{\Lambda}\to \CH^i(X_0,j)_{\Lambda}$$ is an isomorphism for all $i\geq d+1$ and $j\geq 0$.
\end{conj}
This conjecture is a complement to the conjecture of Kerz, Esnault and Wittenberg in \cite[Sec. 10]{KEW16} saying that $res^\CH$ is an isomorphism for $i=d$ and $j\geq 0$.

\section{Main theorem}
We keep the notation of the introduction. In this section we prove Theorem \ref{thmspchowmil}. The strategy of the proof is inspired by \cite[Sec. 4,5]{KEW16}.
By the identifications of Proposition \ref{identificationwithZero-cycles with coefficients in Milnor K-theory}, proving Theorem \ref{thmspchowmil} comes down to studying the following commutative diagram:

$$\begin{xy} 
  \xymatrix{
  \bigoplus_{x\in X^{(d-1)}}K_{j-d+1}^Mk(x) \ar[r]^{sp} \ar[d]^{\partial}  &  \bigoplus_{x\in X_0^{(d-1)}}K_{j-d+1}^Mk(x) \ar[d]^{\partial}   \\
     \bigoplus_{x\in X^{(d)}}K_{j-d}^Mk(x) \ar[r]^{sp} \ar[d]^{\partial}   & \bigoplus_{x\in X_0^{(d)}}K_{j-d}^Mk(x)  \\
     \bigoplus_{x\in X^{(d+1)}}K_{j-d-1}^Mk(x) &
  }
\end{xy} $$  

We first note that one can lift zero-cycles with coefficients in Milnor K-theory from $X_0$ to one-cycles in good position with coefficients in Milnor K-theory.
\begin{proposition}\label{surjchmiln}
The restriction map
$$res:Z_1^g(X,j-d-1)\xrightarrow{} Z_0(X_0,j-d)$$
is surjective for all $j$. 
\end{proposition}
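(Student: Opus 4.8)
The plan is to prove surjectivity one Milnor symbol at a time, exploiting that $X$ is proper over the \emph{henselian} local ring $\O_K$. Recall that $A_0(X_0,j-d)$ is the cokernel of $\partial\colon C_1(X_0,j-d)\r C_0(X_0,j-d)$ and that $C_0(X_0,j-d)=\bigoplus_x (K^M_{j-d}k(x))\otimes\Z/n$, the sum being over the closed points $x\in X_0^{(d)}$. Hence it suffices to produce, for each closed point $x\in X_0$ and each symbol $\alpha=\{a_1,\dots,a_{j-d}\}\in K^M_{j-d}k(x)$, a class in $A_1(X,j-d-1)$ which $res$ sends to the class of $\alpha$ at $x$. (For $j<d$ all groups in sight vanish and there is nothing to prove; for $j=d$ the argument below specialises to the classical surjectivity $CH_1(X)\r CH_0(X_0)$ onto zero-cycles.)

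The geometric input is the following consequence of henselianity: any integral closed curve $C\subseteq X$ dominating $\mathrm{Spec}\,\O_K$ is proper of relative dimension $0$ over $\O_K$, hence finite, so $C=\mathrm{Spec}\,B$ with $B$ a one-dimensional local domain finite over $\O_K$; in particular $C$ has a unique closed point and $C\cap X_0$ is supported there. First I would build a convenient such curve through $x$: since $X_0=V(\pi)$ is smooth over $k$ one has $\pi\notin\mathfrak{m}_x^2$, so $\pi$ extends to a regular system of parameters $\pi,f_1,\dots,f_d$ of $\O_{X,x}$. Let $y$ be the codimension-$d$ point of $X$ cut out by $(f_1,\dots,f_d)$ and put $C=\overline{\{y\}}$. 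Then $C$ is a horizontal curve through $x$ which is regular at $x$, with $\O_{C,x}=\O_{X,x}/(f_1,\dots,f_d)$ a discrete valuation ring whose uniformiser is the image of $\pi$ and whose residue field is $k(x)$, and by the previous remark $C\cap X_0=\{x\}$ set-theoretically.

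Now the cycle complex $C_*(C,j-d-1)$ has only the two terms $(K^M_{j-d}k(y))\otimes\Z/n$ in degree $1$ and $(K^M_{j-d-1}k(x))\otimes\Z/n$ in degree $0$, and because $C\cap X_0=\{x\}$ it is precisely the direct-summand subcomplex of $C_*(X,j-d-1)$ spanned by the points of $C$; the inclusion of complexes is the proper pushforward $\iota_*$ along $\iota\colon C\hookrightarrow X$, and $res$ restricted to this subcomplex is the restriction map of $C$, landing in the summand of $C_0(X_0,j-d)$ at $x$. Lifting each $a_i$ to a unit $u_i\in\O_{C,x}^\times$ and setting $\xi=\{u_1,\dots,u_{j-d}\}\in K^M_{j-d}k(y)$, we get $\partial_x\xi=0$ because $\xi$ is a symbol of units, so $\xi\in Z_1(C,j-d-1)=A_1(C,j-d-1)$; and by definition of the restriction map on $C$ (which on $K^M_{\bullet}k(y)$ is $\partial_x\circ\{-\pi\}=sp_\pi$) one has $res(\xi)=\{\bar u_1,\dots,\bar u_{j-d}\}=\alpha$. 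Therefore $\iota_*\xi\in A_1(X,j-d-1)$ is sent by $res$ to the class of $\alpha$ at $x$, and letting $x$ and $\alpha$ vary shows that $res\colon A_1(X,j-d-1)\r A_0(X_0,j-d)$ is surjective.

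The one step that genuinely requires care, and the place where henselianity of $\O_K$ is used, is the assertion that a horizontal curve in $X$ meets $X_0$ in a single point; without it the symbol $\xi$ would acquire residues at the other points of $C\cap X_0$, where the $u_i$ need not be units, and one would have to correct $\xi$ by a moving argument. Everything else is formal: the identification of the restriction map of $C$ with the Milnor specialisation $sp_\pi$, the fact that the closed immersion $\iota$ induces here simply the inclusion of a direct summand of complexes (so that $\iota_*$ obviously commutes with $res$ and identifies $(K^M_{j-d}k(x))\otimes\Z/n$ with the $x$-summand of $C_0(X_0,j-d)$), and the elementary observation that $sp_\pi$ carries symbols of units onto a generating set of $(K^M_{j-d}k(x))\otimes\Z/n$.
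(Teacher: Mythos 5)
Your proof is correct and takes essentially the same route as the paper's: for each closed point $x\in X_0^{(d)}$ lift it to a horizontal curve meeting $X_0$ transversally at $x$, lift the entries of the symbol to units of the local ring of that curve at $x$, and observe that the resulting symbol of units is a cycle (trivial boundary) restricting to the given class. You merely make explicit two points the paper leaves implicit, namely the construction of the curve from a regular system of parameters containing $\pi$ and the fact that henselianity forces the horizontal curve to meet $X_0$ only at $x$, which is exactly what guarantees $\partial\xi=0$.
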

\begin{proof}
Let $\{\bar{u}_1,...,\bar{u}_{j-d}\}\in K_{j-d}^Mk(x)$ for some $x\in X_0^{(d)}$. Let $y\in X^{(d)}$ be the generic point of a lift $Z$ of $x$ which intersects $X_0$ transversally in $x$. Let $A$ be the stalk of $Z$ at $y$ and denote by $u_i\in A^{\times}$ a lift of $\bar{u}_i$ to the units of $A$. Then $res(\{u_1,...,u_{j-d}\})=\{\bar{u}_1,...,\bar{u}_{j-d}\}$ and $\partial(\{u_1,...,u_{j-d}\})=0$. 
\end{proof}

\begin{figure}
$$\begin{xy} 
  \xymatrix@-1.5em{
 j & ... & \bigoplus_{x\in X^{(1)}}H^{j-1}(k(x),\Lambda(j-1)) \ar[r]^{}  &      \bigoplus_{x\in X^{(2)}}H^{j-2}(k(x),\Lambda(j-2)) \\
 j-1 & ... & \bigoplus_{x\in X^{(1)}}H^{j-2}(k(x),\Lambda(j-1)) \ar[r]^{}  &      \bigoplus_{x\in X^{(2)}}H^{j-3}(k(x),\Lambda(j-2)) \\
  & ... &      ...   &   ...      \\
   2 & ...&    \bigoplus_{x\in X^{(1)}}H^1(k(x),\Lambda(j-1)) \ar[r]^{}    &  \bigoplus_{x\in X^{(2)}}H^0(k(x),\Lambda(j-2))  \\
   1 & ...&  \bigoplus_{x\in X^{(1)}}H^0(k(x),\Lambda(j-1)) & 0 \\
  0 & ... &      0   &   0      \\
   & 0 & 1 & 2
  }
\end{xy} $$ 
\caption{Table of $E^1_{p,q}(X,\Lambda)$ for $X/\O_K$ and $d=1$.}
\label{figure1}
\end{figure}

The following key lemma is the first step to construct an inverse map to $res:A_1(X,j-d-1)\xrightarrow{} A_0(X_0,j-d)$. The second step is done in the proof of Theorem \ref{maintheoremintext}.
\begin{keylemma}\label{keylemmazerowithkoeffmilnork}
\begin{enumerate}
\item Let 
$$\alpha\in \mathrm{ker}[Z_1^g(X,j-d-1)\r Z_0(X_0,j-d)].$$
Then $\alpha\equiv 0 \in A_1(X,j-d-1)$. 
\item There is a well-defined map $\phi: Z_0(X_0,j-d)\r A_1(X,j-d-1)$ making the diagram
$$\begin{xy} 
  \xymatrix{
   A_1(X,j-d-1)  &  \\
   Z_1^g(X,j-d-1)  \ar[r]_{res} \ar[u]  & Z_0(X_0,j-d) \ar[lu]_{\phi} 
  }
\end{xy} $$ 
commute.
\end{enumerate}
\end{keylemma}
\begin{proof}
(1) We start with the case of relative dimension $d= 1$. Let $\Lambda:=\mathbb{Z}/n$ and $\Lambda(q):=\mu_n^{\otimes q}$. We consider the coniveau spectral sequence
$$E_1^{p,q}(X,\Lambda(j))=\oplus_{x\in X^p}H^{q-p}(k(x),\Lambda(j-p))\Rightarrow H_{\text{\'et}}^{p+q}(X,\Lambda(j))$$
for $X$ and $X_0$ respectively (see Figure \ref{figure1} for $E^1_{p,q}(X,\Lambda)$ for $X/\O_K$ and $d=1$) and the norm residue isomorphism $K^M_n(k)/m\cong H^n(k,\mu_m^{\otimes n})$ to show that there are injective edge morphisms 
$$A_1(X,j-d-1)=E_2^{d,j}(X)\hookrightarrow H^{d+j}_{\text{\'et}}(X,\Lambda(j))$$ 
and 
$$A_0(X_0,j-d)=E_2^{d,j}(X_0)\hookrightarrow H^{d+j}_{\text{\'et}}(X_0,\Lambda(j)).$$ 
The injectivity in the second case, i.e. for $X_0$, is trivial since we just have two non-trivial columns for dimensional reasons. In the first case, we have three columns but $E_2^{2,j}(X,\Lambda(c))$ is equal to zero  for all $j$ since the map
$$\oplus_{x\in X^{(1)}}H^{j-2}(k(x),\Lambda(j-1)) \r     \oplus_{x\in X^{(2)}}H^{j-3}(k(x),\Lambda(j-2))$$
is surjective by the same arguments as in the proof of \ref{identificationwithZero-cycles with coefficients in Milnor K-theory}.
The restriction map induces a map between the respective spectral sequences for $X$ and $X_0$ and therefore a commutative diagram

$$\begin{xy} 
  \xymatrix{
  A_1(X,j-d-1) \ar[r]^{} \ar@{^{(}->}[d]_{} & A_0(X_0,j-d)  \ar@{^{(}->}[d]^{}  \\
     H^{d+j}_{\text{\'et}}(X,\Lambda(j)) \ar[r]^{\cong}  & H^{d+j}_{\text{\'et}}(X_0,\Lambda(j))  
  }
\end{xy} $$ 
whose lower horizontal morphism is an isomorphism by proper base change. It follows that $A_1(X,j-d-1)\rightarrow A_0(X_0,j-d)$ is injective. 

Let now $d>1$. We start with some reduction steps. Let 
$$\alpha\in \text{ker}[Z_1^g(X,j-d-1)\r Z_0(X_0,j-d)].$$  By definition, 
$$\alpha=\sum_{x\in X^g_{(1)}} \alpha_x\in  \text{ker}[res:\oplus_{x\in X^g_{(1)}}K_{j-d}^Mk(x) \r     \oplus_{x\in X_0^{(d)}}K_{j-d}^Mk(x)]$$
and $\alpha\in \text{ker}[\partial:\oplus_{x\in X^{(d)}}K_{j-d}^Mk(x) \r\oplus_{x\in X^{(d+1)}}K_{j-d-1}^Mk(x)]$ with $\alpha_x\in K_{j-d}^Mk(x)$.
We may assume that 
$$\alpha\in  \text{ker}[res:\oplus_{x\in X^g_{(1)}}K_{j-d}^Mk(x) \r  K_{j-d}^Mk(x_0)]$$ for some $x_0\in X_0^{(d)}.$

Let $y\in X^g_{(1)}$ be the generic point of a lift $Z_1$ of $x_0$ which intersects $X_0$ transversally in $x_0$.
We may now assume that 
$$\alpha=(\alpha_y,\alpha_z)\in  \text{ker}[res:K_{j-d}^Mk(y)\oplus K_{j-d}^Mk(z) \r  K_{j-d}^Mk(x_0)]$$ for some $z\in X^g_{(1)}.$ This follows from the fact that for every $z\in X^g_{(1)}$ which intersects $X_0$ in $x_0$, we can lift $res(\alpha_z)$ to an element $\alpha_y\in K_{j-d}^Mk(y)$ such that $\partial(\alpha_y)=\partial(\alpha_z)$. This can be seen as follows: Let $\alpha_y'\in K_{j-d}^Mk(y)$ be a lift of $res(\alpha_z)$. If $$\partial(\alpha_y')-\partial(\alpha_z)=\sum_{s\in S}\{\bar{u}_1^{(s)},...,\bar{u}_{j-d-1}^{(s)}\}\neq 0$$ for some $\bar{u}_i^{(s)}\in k(x_0)^\times, 1\leq i\leq j-d-1$ and some finite index set $S$, then choosing $u_i^{(s)}$ to be a lift of $\bar{u}_i^{(s)}$ to a unit in the discrete valuation ring of $k(y)$, we can set $$\alpha_y:=\alpha_y'-\sum_{s\in S}\{\pi,u_1^{(s)},...,u_{j-d}^{(s)}\}.$$ This has the required properties since $res(\{\pi,u_1^{(s)},...,u_{j-d}^{(s)}\})=0$ and $\partial(\{\pi,u_1^{(s)},...,u_{j-d}^{(s)}\})=\{\bar{u}_1^{(s)},...,\bar{u}_{j-d-1}^{(s)}\}$.

We now apply an idea of Bloch to reduce our situation to the case that $Z_1=\overline{\{y\}}$ intersects $X_0$ transversally and that $Z_2=\overline{\{z\}}$ is regular (see \cite[App.]{EWB16}). Let $\tilde Z_2$ be the normalisation of $Z_2$. Since $\mathcal{O}_K$ is excellent, $\tilde Z_2\r Z_2$ is finite and projective. This implies that there is an imbedding $\tilde Z_2\hookrightarrow X':=X\times_{\text{Spec}\mathcal{O}_K}\mathbb{P}^N$ such that the following diagram commutes:
$$\begin{xy} 
  \xymatrix{
  \tilde Z_2 \ar[r]^{} \ar[d]_{} & X'=X\times_{\text{Spec}\mathcal{O}_K}\mathbb{P}^N  \ar[d]^{pr_X}  \\
     Z_2 \ar[r]^{} \ar[d]_{} & X \ar[d]^{}  \\
     \text{Spec}\mathcal{O}_K \ar[r]^{=}     &  \text{Spec}\mathcal{O}_K
  }
\end{xy} $$
Let $(\tilde Z_2\cap X_0')_{\text{red}}= x'_0$ for $x'_0$ an integral zero-dimensional subscheme of $X'_0$. Let $\tilde Z_1$ be a regular lift of $x'$ in $Z_1\times \mathbb{P}^N\subset X'$ which has intersection number $1$ with $X_0'$. We denote the generic points of $\tilde Z_1$ and $\tilde Z_2$ by $y'$ and $z'$ respectively. Note first now that $\alpha_z\in K_{j-d}^Mk(z')$. Then, taking into account ramification, we can lift $res(\alpha_z)\in K_{j-d}^Mk(x'_0)$ to an element $\alpha_y'\in K_{j-d}^Mk(y')$ such that $(\alpha_y',\alpha_z)$ lies in the kernel of $res:\oplus_{x\in X'^{(d+N)}}K_{j-d+1}^Mk(x) \r \oplus_{x\in X_0'^{(d+N)}}K_{j-d}^Mk(x)$ as well as the kernel of $\partial:\oplus_{x\in X'^{(d+N)}}K_{j-d}^Mk(x) \r \oplus_{x\in X'^{(d+N-1)}}K_{j-d}^Mk(x)$ and such that, furthermore, we have that $pr_X((\alpha_y',\alpha_z))=(\alpha_y,\alpha_z)$.
It therefore remains to show that $(\alpha_y',\alpha_z)$ is in the image of the boundary map $$\partial:\oplus_{x\in X'^{(d+N+1)}}K_{j-d+1}^Mk(x) \r     \oplus_{x\in X'^{(d+N)}}K_{j-d}^Mk(x).$$ 

We show that the key lemma holds for 
$$\alpha=(\alpha_y,\alpha_z)\in  \text{ker}[res:K_{j-d}^Mk(y)\oplus K_{j-d}^Mk(z) \r  K_{j-d}^Mk(x_0)]$$
as above assuming that $Z_1=\overline{\{y\}}$ intersects $X_0$ transversally and that $Z_2=\overline{\{z\}}$ is regular by an induction on the relative dimension $d$ of $X$ over $\text{Spec}\mathcal{O}_K$.

Using a standard norm argument, we may assume that the residue field of $\text{Spec}\mathcal{O}_K$ is infinite. By standard Bertini arguments (cf. \cite[Sec. 4]{KEW16} or \cite[Lem. 2.6]{Lu16}), we can find smooth closed subschemes $S_1,S_2\subset X$ with the following properties:
\begin{enumerate}
         \item $S_1$ has fiber dimension one, $S_2$ has fiber dimension $d-1$.
         \item $S_1$ contains $Z_1$, $S_2$ contains $Z_2$.
         \item The intersection $S_1\cap S_2\cap X_0$ consist of reduced points.
      \end{enumerate}
Let $Z_3$ denote the component of $S_1\cap S_2$ containing $x_0$ and let $t$ denote its generic point. 
Then again there is an $\alpha_{t}\in K_{j-d}^Mk(t)$ such that
$res(\alpha_{t})=res(\alpha_{y})=-res(\alpha_{z})$ and $\partial(\alpha_{t})=\partial(\alpha_{y})=-\partial(\alpha_{z})$. 
Now by our induction assumption, both $(\alpha_y,\alpha_{t})$ and $(\alpha_x,\alpha_{t})$ map to zero in $A_1(X,j-d-1)$ and therefore so does $(\alpha_y,\alpha_z)$.

(2) This follows from (1) and Proposition \ref{surjchmiln}.
\end{proof}

\begin{theorem}\label{maintheoremintext}
The restriction map
$$res^{\CH}:\CH^j(X,j-d)_{\Lambda}\xrightarrow{} \CH^j(X_0,j-d)_{\Lambda}$$
is an isomorphism. 
\end{theorem}
\begin{proof}
By the identification of Proposition \ref{identificationwithZero-cycles with coefficients in Milnor K-theory}, it suffices to show that $res:A_1(X,j-d-1)\xrightarrow{} A_0(X_0,j-d)$ is an isomorphism.

We need to show that the map $\phi:Z_0(X_0,j-d)\r A_1(X,j-d-1)$ factorises through the group $A_0(X_0,j-d)$. In other words, we need to show that there is a $\overline{\phi}: A_0(X_0,j-d)\r A_1(X,j-d-1)$ such that the following diagram commutes:
$$\begin{xy} 
  \xymatrix{
    A_0(X_0,j-d) \ar[dr]^{\overline{\phi}}  &   \\
     Z_0(X_0,j-d) \ar[r]^{\phi} \ar[u]^{} & A_1(X,j-d-1)
  }
\end{xy} $$ 
Then $res\circ \overline{\phi}= id$ and since $\overline{\phi}$ is surjective, as we show below, the result follows.

Let $\alpha_0=(\alpha_0^1,...,\alpha_0^{j-d+1})\in C_1(X_0,j-d)=\oplus_{x\in X_0^{(d-1)}}K_{j-d+1}^Mk(x)\otimes \Z/n\Z$ be supported on some $x\in X_0^{(d-1)}$. As in the proof of \cite[Lem. 7.2]{SS10}, we can find a relative surface $Z\subset X$ containing $x$ which is regular at $x$  and such that $Z\cap X_0$ contains $\overline{\{x\}}$ with multiplicity $1$. Let $Z_0$ be the special fiber of $Z$ and denote by $\cup_{i\in I} Z_0^{(i)}\cup \overline{\{x\}}$ the union of the pairwise different irreducible components of $Z_0$. Here the irreducible components different from $\overline{\{x\}}$ are indexed by $I$. Let $z$ be the generic point of $Z$. Now as in the proof of \cite[Lem. 2.1]{Lu16}, we can for all $1\leq t\leq j-d+1$ find a lift $\alpha^t\in k(z)^{\times}$ of $\alpha_0^t$ which specialises to $\alpha_0^t$ in $k(x)^{\times}$ and to $1$ in $K(Z_0^{(i)})^{\times}$ for all $i\in I$. Let $\alpha=(\alpha^1,...,\alpha^{j-d+1})$. Then $\phi(\partial(\alpha_0))=\partial(\alpha)=0$ in $A_1(X,j-d-1)$ which implies the above factorisation.

The surjectivity of $\overline{\phi}$ follows from the surjectivity of $\phi$ which follows from key Lemma \ref{keylemmazerowithkoeffmilnork}: 
let $\alpha\in A_1(X,j-d-1)$. By arguments as in the last paragraph, one sees that $Z^g_1(X,j-d-1)$ generates $A_1(X,j-d-1)$. We may therefore assume that $\alpha\in Z^g_1(X,j-d-1)$. Let $\alpha_0$ be the restriction of $\alpha$ to $Z_0(X_0,j-d)$ and $\alpha'$ be a lift of $\alpha_0$ to $Z^g_1(X,j-d-1)$. Then, by Key Lemma \ref{keylemmazerowithkoeffmilnork}, we have that $\alpha\equiv \alpha'\in A_1(X,j-d-1)$.
\end{proof}

\section{Applications}\label{applications}
We list some applications of Proposition \ref{maintheoremintext}:

\begin{corollary}\label{cor1}
Let $X$ be a smooth projective scheme of relative dimension $d$ over an excellent henselian discrete valuation ring $\O_K$ with finite or algebraically closed residue field. Let $\Lambda=\Z/n\Z$ and $n$ be invertible on $X$. Then
$$\rho_X^{j,j-d}:\CH^j(X,j-d)_{\Lambda} \r H^{d+j}_{\et}(X,\Lambda(j))$$
is an isomorphism for all $j$. 
\end{corollary}
\begin{proof}
Consider the diagram
$$\begin{xy} 
  \xymatrix{
  \CH^j(X,j-d)_{\Lambda} \ar[r]^{\rho_X^{j,j-d}} \ar[d]_{\cong} & H^{d+j}_{\text{\'et}}(X,\Lambda(j)) \ar[d]^{\cong}   \\
     \CH^j(X_0,j-d)_{\Lambda} \ar[r]^{\cong}  & H^{d+j}_{\text{\'et}}(X_0,\Lambda(j)). 
  }
\end{xy} $$ 
The left vertical isomorphism follows from Proposition \ref{maintheoremintext}, the lower horizontal isomorphism from Proposition \ref{katofinitefieldcase} and the right vertical isomorphism from proper base change. This implies that $\rho_X^{j,j-d}$ is an isomorphism for all $j$. 
\end{proof}

\begin{corollary}\label{cor2}
Let $X$ be as in Corollary \ref{cor1} and let $X_K$ denote the generic fiber. Let the residue field of $K$ be finite (resp. separably closed). Then the groups
\begin{enumerate}
\item $\CH^j(X_K,j-d)_{\Lambda}$ are finite for all $j\geq 0$.
\item $\CH^j(X_K,j-d)_{\Lambda}=0$ for $j\geq d+3$ (resp. $j\geq d+2$).

\end{enumerate}
\end{corollary}
\begin{proof} Consider the localisation sequence
$$\CH^{j-1}(X_0,j-d)_{\Lambda}\xrightarrow{i_*} \CH^j(X,j-d)_{\Lambda}\xrightarrow{j^*} \CH^j(X_K,j-d)_{\Lambda}\xrightarrow{}\CH^{j-1}(X_0,j-d-1)_{\Lambda}.$$
By Corollary \ref{cor1} the groups $\CH^j(X,j-d)_{\Lambda}$ are finite for all $j$ and vanish for $j\geq d+2$. By Proposition \ref{katofinitefieldcase}, the groups $\CH^{j-1}(X_0,j-d-1)_{\Lambda}$ are finite for all $j$ and vanish for $j\geq d+3$ (resp. $j\geq d+2$). This implies that $\CH^j(X_K,j-d)_{\Lambda}$ is finite for all $j$, vanishes for $j\geq d+3$ (resp. $j\geq d+2$).
\end{proof}

\section{Torsion}\label{torsion}

In this section we prove finiteness theorems for torsion subgroups of some higher Chow groups of zero-cycles with coefficients in Milnor K-theory for smooth (projective) schemes over non-Archimedean local fields. These theorems generalise results of \cite{CSS82} (see also \cite{CSS83}) and \cite{Sz00}.

\begin{notat} For an abelian group $A$ we denote by $A[n]$ the kernel of the multiplication by $n$. For a prime $l$ we denote by $A\{l\}$ the $l$-primary torsion subgroups of $A$ and by $A_{\mathrm{tors}}$ the entire torsion subgroup of $A$. 

For $X$ a scheme we denote by $\mathcal{H}^q(\mu_n^{\otimes m})$ the Zariski sheaf associated to the presheaf $U\mapsto H^q_{\et}(U,\mu_n^{\otimes m})$.
\end{notat}

Let $X$ be a smooth variety over a field. Recall that by \cite{BO74} the Leray spectral sequence associated to the canonical morphism of sites $X_{\text{\'et}}\r X_{\text{Zar}}$
\begin{equation}\label{spseqchangeofsites}
E_2^{p,q}=H^p(X,\mathcal{H}^q(\mu_n^{\otimes m}))\Rightarrow H^{p+q}_{\text{\'et}}(X,\mu_n^{\otimes m})
\end{equation}
and the coniveau spectral sequence
\begin{equation}\label{spseqchangeofsites2}
E_1^{p,q}=\oplus_{x\in X^p}H^{q-p}(k(x),\mu_n^{\otimes m-p})\Rightarrow H^{p+q}_{\text{\'et}}(X,\mu_n^{\otimes m})
\end{equation}
agree from $E_2$ onwards and that therefore in particular
$$H^{p}(X,\mathcal{H}^q(\mu_n^{\otimes m}))=0 \text{ for } p>q.$$

\begin{proposition}\label{propfinitenessCH2(C,1)[n]}
Let $S$ be a smooth surface over a field $k$. Let $n>0$ be a natural number prime to the characteristic of $k$. Then for all $j\geq 0$ there is a surjection
$$\CH^{2+j}(S,j+1,\Z/n\Z)\twoheadrightarrow\CH^{2+j}(S,j)[n]$$
and $\CH^{2+j}(S,j+1,\Z/n\Z)$ is an extension of $H^1(S,\mathcal{K}^M_{2+j}/n)$ by a finite group. Furthermore, we have the following diagram:
$$\begin{xy} 
  \xymatrix{
   H^1(S,\mathcal{K}^M_{2+j}/n) \ar[d]^{\simeq}  & & \\
     H^1(S,\mathcal{H}^{2+j}(\mu_n^{\otimes 2+j})) \ar[r]^-{\simeq}  & E_\infty^{1,2+j} & \\
     & F^1H^{3+j} \ar@{->>}[u]^{} \ar@{^{(}->}[r]^{} & H^{3+j}_{\et}(S,\mu_n^{\otimes 2+j})
  }
\end{xy} $$ 
In particular, if $k$ is either separably closed, local with finite residue field or finite, then the groups $$\CH^{2+j}(S,j)[n]$$
are finite.
\end{proposition}
\begin{proof}
The surjectivity of $\CH^{2+j}(S,j+1,\Z/n\Z)\twoheadrightarrow\CH^{2+j}(S,j)[n]$ is clear. For the second statement consider the spectral sequence 
$$ ^{\CH}E^{p,q}_1(S)=\oplus_{x\in S^{(p)}}\CH^{2+j-p}(\text{Spec}k(x),-p-q)_\Lambda\Rightarrow \CH^{2+j}(S,-p-q)_\Lambda$$
and use Theorem \ref{Beillichenbaum} to show that $E_\infty^{1,-(2+j)}(S)\cong H^1(S,\mathcal{K}^M_{2+j}/n)$ and that $E_\infty^{2,-(3+j)}(S)$ is finite.
 
We now turn to the sequence of arrows in the diagram of the proposition. By the Bloch-Kato conjecture (see \cite{Vo11}) and the results recalled from \cite{BO74} at the beginning of this section, we have an isomorphism $$H^1(S,\mathcal{K}^M_{2+j}/n)\cong H^1(S,\mathcal{H}^{2+j}(\mu_n^{\otimes 2+j}))$$ in the Zariski topology. Since dim$(S)=2$, we have that $H^1(S,\mathcal{H}^{2+j}(\mu_n^{\otimes 2+j})) \cong E_2^{1,2+j}\cong E_\infty^{1,2+j}$ which is a quotient of $F^1H^{3+j}\subset H^{3+j}(S,\mu_n^{\otimes 2+j})$.
If $k$ is separably closed, then $H^{3+j}_{\text{\'et}}(S,\mu_n^{\otimes 2+j})$ is finite by \cite[Ch. XVI, Thm.  5.2]{SGA4}. This implies the finiteness for $k$ finite of local with finite residue field by the Hochschild-Serre spectral sequence.
\end{proof}

\begin{remark}
\begin{enumerate}
\item The case $j=0$ of Proposition \ref{propfinitenessCH2(C,1)[n]} was first shown in \cite{CSS82} and \cite{CSS83}. The case $j=1$ is shown in \cite{Sz00} assuming that $S$ is proper.
\item
Let $X$ be a smooth projective scheme of dimension $d$ over a $p$-adic field $K$. In \cite[Sec. 1, Sec. 5]{AS07}, Asakura and Saito show that neither the group $\CH^d(X)_{\mathrm{tors}}$ nor the group $\CH^{d+1}(X,1)_{\mathrm{tors}}$ may be expected to be finite.
\item
It would be interesting to have a conjecture on the expected structure of $\CH^j(X,j-d)^{\wedge p}$, the $p$-completion of $\CH^j(X,j-d)$, for $X$ a variety over a $p$-adic field for $j>0$. For $j=0$ see \cite[Sec. 1]{Co95}.

\end{enumerate}
\end{remark}
For proper smooth schemes one can generalise the above proposition for $j\geq 1$ to arbitrary dimension:
\begin{proposition}\label{finitenesstorsionjgeq1}
Let $X_K$ be a smooth and proper scheme of dimension $d$ over a local field $K$ with ring of integers $\O_K$ and finite residue field $k$ of characteristic $p$. Assume that $X_K$ has good reduction over $\O_K$ and let $n>0$ be a natural number prime to $p$. Then for all $j\geq 1$ the groups $$\CH^{d+j}(X_K,j)[n]$$ are finite.
\end{proposition}
\begin{proof}
For higher Chow groups we have the following exact sequence:
$$0\rightarrow \CH^s(X_K,t)/n\rightarrow \CH^s(X_K,t,\mathbb{Z}/n\mathbb{Z})\rightarrow \CH^s(X_K,t-1)[n]\rightarrow 0$$
In order to study $\CH^{d+j}(X_K,j)[n]$, one can therefore study the group $\CH^{d+j}(X_K,j+1,\mathbb{Z}/n\mathbb{Z})$. 
By Levine's localization sequence for higher Chow groups, $\CH^{d+j}(X_K,j+1,\mathbb{Z}/n\mathbb{Z})$ fits into the exact sequence 
$$\CH^{d+j}(X,j+1,\mathbb{Z}/n\mathbb{Z})\rightarrow \CH^{d+j}(X_K,j+1,\mathbb{Z}/n\mathbb{Z})\rightarrow \CH^{d+j-1}(X_k,j,\mathbb{Z}/n\mathbb{Z}).$$
By Proposition \ref{katofinitefieldcase}, Proposition \ref{katochetale} and the Kato conjectures, the groups $\CH^{d+j}(X,j+1,\mathbb{Z}/n\mathbb{Z})$ and $\CH^{d+j-1}(X_k,j,\mathbb{Z}/n\mathbb{Z})$ are finite if $j\geq 1$. This implies that $\CH^{d+j}(X_K,j)[n]$ is finite if $j\geq 1$.
\end{proof}

\section{A finiteness theorem}\label{secfiniteness}
In \cite{KaS86}, Kato and Saito prove the following theorem:
\begin{theorem}(\cite[Thm. 7.1]{KaS86})\label{KaSA71} Let $F$ be a number field, $\O_F$ the ring of integers of $F$ and $C$ an open subset of $\O_F$. Let $X$ be projective and integral over $C$ and $K$ be the function field of $X$. Let $d=\mathrm{dim}X$. Then the following statements hold:
\begin{enumerate} 
\item[(1)] If $n>d+1$, then the group $H^d_{\Sigma}(X_\mathrm{Nis},\mathcal{K}^M_{n+1}(\O_K,\mathcal{I}))$ vanishes for any non-zero ideal $\mathcal{I}$ of $\O_K$. 
\item[(2)] For a sufficiently small non-zero ideal $\mathcal{I}$ of $\O_K$, there exists a canonical isomorphism 
$$s_X: H^d_{\Sigma}(X_\mathrm{Nis},\mathcal{K}^M_{d+1}(\O_K,\mathcal{I}))\cong \mu(K).$$
\end{enumerate}
\end{theorem}
Here $\mathcal{K}^M_{d+1}(\O_K,\mathcal{I}):=\text{ker}[\mathcal{K}^M_{d+1}(\O_K)\r \mathcal{K}^M_{d+1}(\O_K/\mathcal{I})]$ and $\mu(K)$ is the group of all roots of $1$ in $K$. For the exact definition of $H^d_{\Sigma}(X_\mathrm{Nis},\mathcal{K}^M_{d+1}(\O_K,\mathcal{I}))$ see \cite[(1.4.1)]{KaS86}. We just note that $H^d_{\Sigma}(X_\mathrm{Nis},\mathcal{K}^M_{d+1}(\O_K,\mathcal{I}))=H^d_{}(X_\mathrm{Nis},\mathcal{K}^M_{d+1}(\O_K,\mathcal{I}))$ if $\text{ch}(K)\neq 0$. 

In \cite{Ak04}, Akhtar proves the following theorem using mixed K-groups:
\begin{theorem}
Let $X$ be a sooth projective variety over a finite field $k$ with structure morphism $\sigma:X\r \Spec(k)$, then the induced map $$\sigma_*: \CH^{d+1}(X,1)\r \CH^1(\Spec(k),1)\cong k^\times$$
is an isomorphism.
\end{theorem}

This last theorem implies in particular the finiteness of $\CH^{d+1}(X,1)$ for a smooth projective scheme $X$ of dimension $d$ over a finite field. In this section we give a different proof of this finiteness using the \'etale cycle class map, the Kato conjectures and the Weil conjectures proved by Deligne. We follow ideas of \cite{CSS82} and \cite{CSS83}. Note that $\CH^d(X)$ is studied in unramified class field theory and treated for example in \cite{CSS83}, \cite{KaS83} and \cite[Sec. 8]{Sz10}.

Let $X$ be a smooth scheme over a field $k$. Let $\Z(i)$ denote the motivic complex on the Zariski site of $X$ defined by Suslin and Voevodsky (see \cite{SV00}). For a ring $\Lambda$ we denote $\Z(i)\otimes \Lambda$ by $\Lambda(i)$. In the following let $l$ be a prime number. We recall that $\Z(i)$ has the following properties:
\begin{enumerate}
\item There is an isomorphism $H^n(X,\Z(i))\cong \CH^i(X,2i-n)$.
\item Let $\pi:X_{\text{\'et}}\r X_{\Zar}$ be the canonical map of sites. Let
\begin{equation*} \Z/l^n\Z(i):=
\begin{cases} 
\mu^{\otimes i}_{l^n} & l\neq p \\
v_n(i)[-i] & l=p
\end{cases}
\end{equation*}
in the derived category of \'etale sheaves on $X$.
There are quasi-isomorphisms 
$$\pi^*\Z(i)\otimes \Z/l^n\Z\xrightarrow{\cong} \Z/l^n\Z(i),$$
and
$$\Z(i)\otimes \Z/l^n\Z\xrightarrow{\cong} \tau_{\leq i}R\pi_*\Z/l^n\Z(i),$$
in the derived category of \'etale and Zariski sheaves on $X$ respectively. For $l\neq p$ these isomorphisms are shown in \cite{SV00} and \cite{SV96} respectively if $k$ is of characteristic $0$ and without assumption on $k$ in \cite{GL00} and \cite{GL01}. For $l=p$ they are shown in \cite[Thm. 8.5]{GL00}.
\end{enumerate}

We now note that for all $j\geq 0$ we have the following commutative diagram:
$$\begin{xy} 
  \xymatrix{
     H^{2d+j-1}(X,\Z/l^n\Z(d+j)) \ar[d]^{}  \ar@{->>}[r]^{} &  \CH^{d+j}(X,j)[l^n] \ar[d]^{} \\
      H^{2d+j-1}_{\text{\'et}}(X,\Z/l^n\Z(d+j)) \ar[r]^{}  &  H^{2d+j}_{\text{\'et}}(X,\Z/l^m\Z(d+j))
  }
\end{xy} $$ 
Let us recall the construction (cf. \cite[Sec. 3]{Sz00}, where the commutativity of the above diagram is shown in detail, and \cite[Sec. 8]{Sz10}):
\begin{enumerate}
\item The upper horizontal map comes from the distinguished triangle
$$\Z(i)\xrightarrow{l^n} \Z(i)\r \Z/l^n\Z(i)\r \Z(i)[1].$$
\item The lower horizontal map comes from the exact sequence
$$0\r \mu^{\otimes d+j}_{l^m}\rightarrow{} \mu^{\otimes d+j}_{l^{mn}}\r \mu^{\otimes d+j}_{l^n}\r 0$$ 
on $X_{\text{\'et}}$ inducing the distinguished triangle
$$R\pi_*\mu^{\otimes d+j}_{l^m}\r R\pi_*\mu^{\otimes d+j}_{l^{mn}}\r R\pi_*\mu^{\otimes d+j}_{l^n} \r R\pi_*\mu^{\otimes d+j}_{l^m}[1].$$
in the $\text{D}(X_{\text{Zar}})$.
\item The two vertical maps are induced by the change of sites map $\pi:X_{\text{\'et}}\r X_{\text{Zar}}$, i.e. 
\begin{equation*} \begin{split}
H^{2d+j-1}(X,\Z/l^n\Z(d+j))\xrightarrow{\simeq} H^{2d+j-1}(X,\tau_{\leq d+j}R\pi_*\Z/l^n\Z(d+j))\\ \r  H^{2d+j-1}(X,R\pi_*\Z/l^n\Z(d+j))\xrightarrow{\simeq} H^{2d+j-1}_{\text{\'et}}(X,\Z/l^n\Z(d+j))
\end{split} \end{equation*} 
and 
$$H^{2d+j}(X,\Z(d+j))\r H^{2d+j}(X,\tau_{\leq d+j}R\pi_*\Z/l^n\Z(d+j))\r  H^{2d+j}_{\text{\'et}}(X,\Z/l^n\Z(d+j)).$$
\end{enumerate}

Taking the colimit over $n$ and the limit over $m$ in the above commutative diagram, we get the following commutative diagram:
\begin{equation}\label{commutativediagramprop2.6.2}
\begin{xy} 
  \xymatrix{
     H^{2d+j-1}(X,\Q_\ell/\Z_\ell(d+j)) \ar[d]^{}  \ar@{->>}[r]^{} &  \CH^{d+j}(X,j)\{l\} \ar[d]^{} \\
      H^{2d+j-1}_{\text{\'et}}(X,\Q_\ell/\Z_\ell(d+j)) \ar[r]^{}  &  H^{2d+j}_{\text{\'et}}(X,\Z_\ell(d+j))
  }
\end{xy} 
\end{equation}

\begin{proposition}\label{propositionfinitenesstorsionfinitefield}
Let $X$ be a smooth projective scheme of dimension $d$ over a finite field $k$ of characteristic $p>0$. Then the cycle class map
$$\rho:\CH^{d+1}(X,1)\{l\} \r  H^{2d+1}_{\et}(X,\Z_\ell(d+1))$$
is an isomorphism for $l$ a prime different from $p$, $\CH^{d+1}(X,1)\{p\}=0$ and the group
$$\CH^{d+1}(X,1)$$
is finite. 
\end{proposition}
\begin{proof}
We first note that $\CH^{d+1}(X,1)=\CH^{d+1}(X,1)_{\text{tors}}$ since $$\CH^{d+1}(X,1)\cong \mathrm{coker}[\bigoplus_{x\in X^{(d-1)}}K_{2}^Mk(x)\rightarrow \bigoplus_{x\in X^{(d)}}K_{1}^Mk(x)]$$ and since $K_{1}^Mk(x)=k(x)^\times$ is a finite group for $x\in X^{(d)}$. For the last statement it therefore suffices to show the finiteness of $\CH^{d+1}(X,1)_{\text{tors}}$. 

Let us first show that $\CH^{d+1}(X,1)$ does not contain any $p$-torsion.
Since $\mathcal{K}^M_{X,*}$ is $p$-torsion free by \cite{Iz91}, we get an exact sequence
$$0\r \mathcal{K}^M_{X,*}\xrightarrow{p^n} \mathcal{K}^M_{X,*}\r \mathcal{K}^M_{X,*}/p^n\r 0.$$
This induces a surjection 
$$H^{d-1}(X,\mathcal{K}^M_{X,d+1}/p^n) \twoheadrightarrow H^{d}(X,\mathcal{K}^M_{X,d+1})[p^n]\cong \CH^{d+1}(X,1)[p^n].$$
Since $H^{d-1}(X,\mathcal{K}^M_{X,d+1}/p^n)\cong H^{d-1}(X,v_n(d+1))=0$ by the Bloch-Kato-Gabber theorem, the statement follows.

Let $l$ be a prime different from $p$. For $j=1$, diagram (\ref{commutativediagramprop2.6.2}) takes the form
\begin{equation*}
\begin{xy} 
  \xymatrix{
     H^{2d}(X,\Q_\ell/\Z_\ell(d+1)) \ar[d]^{\cong}  \ar@{->>}[r]^{} &  \CH^{d+1}(X,1)\{l\} \ar[d]^{\rho} \\
      H^{2d}_{\text{\'et}}(X,\Q_\ell/\Z_\ell(d+1)) \ar[r]^{\cong}  &  H^{2d+1}_{\text{\'et}}(X,\Z_\ell(d+1))
  }
\end{xy} 
\end{equation*}
The left vertical morphism is an isomorphism by Proposition \ref{katofinitefieldcase} ($j=d+1$ and $a=1$). That the lower horizontal map is an isomorphism follows from the vanishing of $H^{2d}_{\text{\'et}}(X,\Q_\ell(d+1))$ and $H^{2d+1}_{\text{\'et}}(X,\Q_\ell(d+1))$ which follows from the fact that the groups 
$$H^{i}_{\text{\'et}}(X,\Z_\ell(r))$$
are torsion for $i\neq 2r,2r+1$. This follows from the Weil conjectures (see \cite[Sec. 2]{CSS83}). In particular $\rho$ is an isomorphism. 
The finiteness of $\CH^{d+1}(X,1)_{\text{tors}}$ now follows from the above diagram and the fact that for a smooth projective scheme $X$ over a finite field $k$, the groups 
$$H^{i}_{\text{\'et}}(X,\Q_\ell/\Z_\ell(r))$$
are finite for $i\neq 2r,2r+1$ and zero for almost all $\ell$ (see \cite[Thm. 2]{CSS82} and \cite[Sec. 2]{CSS83})
and the fact that $\CH^{d+1}(X,1)$ does not contain any $p$-torsion which we showed in the beginning. 
\end{proof}

\begin{remark}
Two remarks are in order:
\begin{enumerate}
\item The injectivity of $\rho$, implying the finiteness of $\CH^{d+1}(X,1)$, may be reduced to curves as follows:
let $C$ be a smooth curve over $k$ and $l$ be a prime number prime to $p$. In this case we have the following commutative diagram by the the same arguments as in the proof of Proposition \ref{propositionfinitenesstorsionfinitefield}:
$$\begin{xy} 
  \xymatrix{
     H^{2}(C,\Q_\ell/\Z_\ell(2)) \ar[d]^{\cong}  \ar@{->>}[r]^{} &  \CH^{2}(C,1)\{l\} \ar[d]^{} \\
      H^{2}_{\et}(C,\Q_\ell/\Z_\ell(2)) \ar[r]^{\cong}  &  H^{3}_{\et}(C,\Z_\ell(2))
  }
\end{xy} $$ 
In particular, the map $\CH^{2}(C,1)\{l\}   \r  H^{3}_{\et}(C,\Q_\ell/\Z_\ell(2))$ is an isomorphism. 

We now show that the map $\CH^{d+1}(X,1)\{l\} \r  H^{2d+1}_{\et}(X,\Z_\ell(d+1))$ is injective. Let $\alpha\in \mathrm{ker}[\CH^{d+1}(X,1)\{l\} \r  H^{2d+1}_{\et}(X,\Z_\ell(d+1))]$. By the Bertini theorem of Poonen (see \cite{Po08}) and noting that 
$$\CH^{d+1}(X,1)\cong \mathrm{coker}[\bigoplus_{x\in X^{(d-1)}}K_{2}^Mk(x)\rightarrow \bigoplus_{x\in X^{(d)}}K_{1}^Mk(x)],$$ 
we can find a smooth curve $C$ containing the support of $\alpha$. Since $\CH^{2}(C,1)$ is torsion, $\alpha\in \CH^{2}(C,1)\{lm\}$ for $m$ prime to $l$. Furthermore $m$ is prime to $p$ since  $\CH^{2}(C,1)$ is $p$-torsion free. The injectivity now follows from the following commutative diagram: 
$$\begin{xy} 
  \xymatrix{
      \CH^{2}(C,1)\{lm\} \ar[d]^{}  \ar@{^{(}->}[r]^{} &    H^{3}_{\et}(C,\Z_{\ell m}(2)) \ar[d]^{\simeq} \\
      \CH^{d+1}(X,1)\{lm\} \ar[r]^{}  &    H^{2d+1}_{\et}(X,\Z_{\ell m}(d+1)) \\
      \CH^{d+1}(X,1)\{l\} \ar[r]^{} \ar@{^{(}->}[u]^{} &    H^{2d+1}_{\et}(X,\Z_\ell(d+1)) \ar@{^{(}->}[u]^{}
  }
\end{xy} $$ 
Note that the upper horizontal map is injective since $H^{i}_{\text{\'et}}(X,\Z_{\ell m}(r))\cong H^{i}_{\text{\'et}}(X,\Z_\ell(r))\times H^{i}_{\text{\'et}}(X,\Z_m(r))$.
The isomorphism on the right follows from the weak Lefschetz theorem (see \cite[Thm. 7.1]{Mi80}) and the Hochschild-Serre spectral sequence (cf. \cite[p. 793]{CSS83}).
\item Let $X$ be a smooth projective scheme of dimension $d$ over a finite field $k$ of characteristic $p>0$. For $j=0$, diagram (\ref{commutativediagramprop2.6.2}) takes the form
\begin{equation*}
\begin{xy} 
  \xymatrix{
     H^{2d-1}(X,\Q_\ell/\Z_\ell(d)) \ar[d]^{\cong}  \ar@{->>}[r]^{} &  \CH^{d}(X)\{l\} \ar[d]^{} \\
      H^{2d-1}_{\et}(X,\Q_\ell/\Z_\ell(d)) \ar@{^{(}->}[r]^{}  &  H^{2d}_{\et}(X,\Z_\ell(d))
  }
\end{xy} 
\end{equation*}
The left vertical arrow is an isomorphism by Proposition \ref{katofinitefieldcase} and the injectivity of the lower horizontal map follows from the vanishing of $H^{2d-1}_{\et}(X,\Q_\ell(d))$. In particular the map $\CH^{d}(X)\{l\}\r H^{2d}_{\et}(X,\Z_\ell(d))$ is injective (see \cite[Thm. 22(iii)]{CSS83} for the surface case).

Now again the fact that the groups 
$$H^{i}_{\et}(X,\Q_\ell/\Z_\ell(r))$$
are finite for $i\neq 2r,2r+1$ and zero for almost all $\ell$ (see \cite[Thm. 2]{CSS82} and \cite[Sec. 2]{CSS83}) implies that $\CH^{d}(X)\{l\}$ is finite and zero for almost all $l$. A similar argument works for the $p$-primary torsion part. In fact there is a surjection $\CH^d(X,1;\Z/p^n\Z)\r \CH^{d}(X)[p^n]$ and $\CH^d(X,q;\Z/p^r\Z)\r H_{\et}^{2d-q}(X,\Z/p^r\Z(d))$
is an isomorphism since the Kato homology groups $KH^{0}_{i}(X,\Z/p^r\Z)$ vanishes for $1\leq i \leq 4$ by \cite[Thm. 0.3]{JS} (see also the proof of \cite[Lem. 6.7]{Lu17'}). Noting that $A_0(X)$ is torsion (see \cite[Prop. 4]{CSS83}), this implies that 
$$A_0(X)$$
is finite. We therefore see that a reduction to surfaces, which was used in \cite{CR86} (see also \cite[Sec. 5]{Ct93}), is not necessary if one uses the Kato conjectures. This was already remarked upon in \cite[p. 25]{Sz10}. 
\end{enumerate}
\end{remark}

\begin{remark}
That $\CH^{d+1}(X,1)$ is torsion is a special case of Parshin's conjecture saying that $K_i(X)\otimes \Q=0$ for a smooth projective scheme over a finite field if $i>0$. Proposition \ref{propositionfinitenesstorsionfinitefield}  says more: $\CH^{d+1}(X,1)$ is not just torsion but finite. This proves a special case of Bass's finiteness conjecture saying that $\CH^r(X,q)$ should be finitely generated for all $r,q\in \N$ if $X$ is of finite type over $\Z$ or a finite field.
\end{remark}

We finish with the following conjecture:
\begin{conj}
Let $X$ be proper and of finite type over $\Z$. Then the group
$$\CH^{d+1}(X,1)$$
is finite.
\end{conj}

\bibliographystyle{plain}
\bibliography{Bibliografie}
\end{document}